\documentclass[11pt,reqno]{amsart}

\usepackage[utf8]{inputenc}
\usepackage[T1]{fontenc}
\usepackage[english]{babel}
\usepackage{amsmath, amssymb, amsthm, amsfonts}
\usepackage{mathrsfs}
\usepackage{mathtools}
\usepackage{geometry}
\usepackage{xcolor}
\usepackage{enumitem}
\usepackage{hyperref}

\newtheorem{theorem}{Theorem}[section]
\newtheorem{lemma}[theorem]{Lemma}

\theoremstyle{remark}
\newtheorem{remark}[theorem]{Remark}
\newtheorem{definition}[theorem]{Definition}

\title[]{Critical wave equation with nonlinear $E(t)u_t$ damping}
\author{M. M. Cavalcanti, V. N. Domingos Cavalcanti, J. C. O. Faria , C. A. Okawa}
\address{Department of Mathematics, State University of Maringá, Maringá, PR, Brazil.}
\email{mmcavalcanti@uem.br;~vndcavalcanti@uem.br; jcofaria@uem.br; cintyaokawa@gmail.com}

\begin{document}
\begin{abstract}
We investigate a semilinear wave equation with energy--critical
nonlinearity and a nonlinear damping mechanism driven by the total
energy of the system. The model combines the quintic defocusing term
with a time--dependent dissipation of the form $E(t)u_t$, which introduces
a nonstandard feedback structure coupling the dynamics and the energy
functional.

Weak solutions are constructed via Galerkin approximations, with the
passage to the limit relying on uniform energy estimates and compactness
arguments. Special attention is devoted to the critical nature of the
nonlinearity, where concentration phenomena prevent purely energy--based
methods from yielding refined spacetime control. This difficulty is
resolved by incorporating nonhomogeneous Strichartz estimates together
with smoothly truncated spectral approximations, ensuring uniform bounds
at the dispersive level.

Finally, we establish polynomial decay rates for the energy by adapting
Nakao’s method to the present nonlinear dissipative framework. The results
highlight the stabilizing effect of the energy--dependent damping and its
interaction with the critical wave dynamics.
\end{abstract}

\maketitle
\tableofcontents
\section{Introduction}

In 1989, Balakrishnan and Taylor \cite{Balakrishnan}, inspired by Zhang's work \cite{Zhang}, introduced several models related to the study of dissipative phenomena, referred to as nonlocal dissipations, in flight structures. The single mode behavior of such models can be described by the following ODE \cite[p. FDC-3]{Balakrishnan}:
\begin{equation}\label{eq:ODE_BT}
\ddot x(t) + \lambda x(t) + d D(x(t),\dot x(t))=0,\quad t>0,
\end{equation}
where $x(t)$ represents the displacement of a point $x$ in the flight structure at time $t$ and $\lambda,\, d$ are positive constants. This model has attracted the attention of researchers worldwide due to the slow decay produced by this kind of damping mechanism.

Now, we are going to discuss some distributed parameter models. Let $\Omega$ be a bounded domain in $\mathbb{R}^n$ with smooth boundary and let us consider, for instance, the linear wave equation subject to a nonlinear and nonlocal damping term of Balakrishnan-Taylor type:
\begin{equation}\label{eq1.1'}
\partial_{tt} u - \Delta u + E_u^w(t) \partial_t u = 0 \quad \hbox{ in } \Omega \times \mathbb{R}_+,
\end{equation}
with Dirichlet homogeneous boundary conditions and weak initial data $(u_0,u_1)\in H_0^1(\Omega) \times L^2(\Omega)$. The energy associated to weak solutions to problem \eqref{eq1.1'} is given by
$$E_u^w(t):= \frac12 \|\partial_t u(t)\|_2^2 + \frac12\|\nabla u(t)\|_2^2.$$
A simple computation shows that
\begin{equation*}
\frac{d}{dt} E^w_u(t) + E_u^w(t) \|\partial_t u(t)\|_{L^2(\Omega)}^2=0
\end{equation*}
for all $t\geq 0$. Since $\|\partial_t u(t)\|_{L^2(\Omega)}^2 \le 2E_u^w(t)$, we obtain
\begin{equation}\label{eq1.2}
\frac{d}{dt} E_u^w(t) + 2\left(E^w_u(t)\right)^2 \geq 0
\end{equation}
for any $t\geq 0.$ Thus, integrating inequality \eqref{eq1.2}, we infer that
\begin{equation}\label{eq1.3}
E^w_u(t) \geq \left( \frac{1}{E^w_u(0)} + 2t\right)^{-1}
\end{equation}
for any $t\geq 0,$ which means that the energy $E^w_u(t)$ cannot decay fast to zero. In fact, it can be proven that the energy associated with problem \eqref{eq1.1'} satisfies the following inequality:
\begin{equation*}
\left(4t+\frac{1}{E^w_u(0)}\right)^{-1}\leq E_u^w(t)\leq \left(\frac{1}{\mu}(t-1)^{+}+\frac{1}{E_u^w(0)}\right)^{-1}
\end{equation*}
for some $\mu>0$ and for any $t\geq 0$, which means that $1/t$ is the optimal decay rate estimate for this kind of prototype.

Motivated by these physical models and the inherently slow dissipation they produce, the primary goal of the present paper is to investigate the interaction between the nonlocal Balakrishnan-Taylor damping and an energy-critical nonlinearity. Specifically, we study the well-posedness and regularity of the defocusing critical wave equation in a three-dimensional bounded domain $\Omega$:
\begin{equation}\label{eq:main-intro}
\begin{cases}
&u_{tt}-\Delta u+u^5+E(t)u_t=0,  \hbox{in } (0,\infty)\times\Omega,~u=0, \\
& \hbox{on } (0,\infty)\times\partial\Omega,\\
&(u(0),u_t(0))=(u_0,u_1)\in H_0^1(\Omega)\times L^2(\Omega).
\end{cases}
\end{equation}

The analysis of energy-critical wave equations has been a cornerstone in the theory of dispersive PDEs. For the pure wave equation, global well-posedness and scattering were established in the foundational works of Grillakis \cite{Grillakis} and Shatah and Struwe \cite{ShatahStruwe}. These breakthroughs were built upon the deep understanding of defect measures by G'erard \cite{Gerard} and the profile decomposition techniques developed by Bahouri and G'erard \cite{BahouriGerard} and Kenig and Merle \cite{KenigMerle}. These tools are essential to rule out energy concentration and the formation of singularities. However, the study of critical wave equations on bounded domains introduces severe technical obstacles. As demonstrated by Fefferman's celebrated ball multiplier theorem \cite{Fefferman}, sharp spectral projectors (such as standard Galerkin truncations) are unbounded in $L^p(\Omega)$ spaces for $p \neq 2$. This prevents the direct extraction of uniform critical Strichartz bounds from finite-dimensional approximations. While linear Strichartz estimates on manifolds with boundaries are well-understood—thanks to the foundational works of Smith and Sogge \cite{SmithSogge1995} and the deep results by Blair, Smith, and Sogge \cite{BlairSmithSogge}, who established that dispersive bounds hold by delicately analyzing the $L^p$ norms of spectral clusters to overcome the geometric problem of wave caustics concentrating energy at the boundary  —their application to critical nonlinearities requires delicate regularization techniques. Furthermore, by means of the Christ-Kiselev lemma \cite{ChristKiselev}, these homogeneous estimates successfully extend to the nonhomogeneous linear wave equation, providing the necessary bounds to handle the source terms via Duhamel's principle.To overcome these difficulties, the core contribution of this paper relies on a smooth spectral approximation of Littlewood-Paley type, inspired by the framework of Burq, Lebeau, and Planchon \cite{BurqLebeauPlanchon}. By smoothing the spectral cut-off, we completely bypass the Fefferman multiplier limitation, obtaining uniform \textit{a priori} bounds in the critical Strichartz spaces $L^5_t L^{10}_x$ and $L^4_t L^{12}_x$. This approach ensures that the sequence of approximate solutions is immune to profile formation, allowing a rigorous passage to the limit to construct a unique global Shatah-Struwe solution. Furthermore, we demonstrate that this high-regularity setting allows us to derive uniform bounds for the higher-order energy, ensuring that no derivative loss occurs.Once the global strong well-posedness is firmly established, we address the asymptotic behavior of the system. We show that the algebraic decay rate $\mathcal{O}(t^{-1})$, inherent to the Balakrishnan-Taylor damping, is perfectly preserved in the presence of the critical defocusing nonlinearity by means of Nakao's difference inequality method \cite{Nakao}. To the best of our knowledge, this work introduces a novel framework to bypass the classical Fefferman spectral obstruction in the context of energy-critical damping.

The paper is organized as follows: In Section 2, we construct energy weak solutions using the standard sharp Galerkin method, which is perfectly suited for $L^2$-based energy identities. In Section 3, we introduce the smooth spectral multipliers and prove the uniform critical bounds leading to the global Shatah-Struwe solutions. Finally, Section 4 is devoted to the asymptotic behavior of the energy.
\section{Existence of energy weak solutions}

We prove the existence of a global energy weak solution to the defocusing energy-critical wave equation:
\begin{equation}\label{eq:main-weak}
\begin{cases}
u_{tt}-\Delta u+u^5+E(t)u_t=0, & (t,x)\in (0,T)\times\Omega,\\
u=0, & (t,x)\in (0,T)\times\partial\Omega,\\
(u(0),u_t(0))=(u_0,u_1)\in H_0^1(\Omega)\times L^2(\Omega),
\end{cases}
\end{equation}
where $\Omega \subset \mathbb{R}^3$ is a smooth bounded domain and the energy is defined by
\begin{equation}\label{eq:Edef}
E(t):=\frac12\Big(\|\nabla u(t)\|_{L^2}^2+\|u_t(t)\|_{L^2}^2\Big)
+\frac16\|u(t)\|_{L^6}^6.
\end{equation}

\subsection{Galerkin approximation}

Let $\{\varphi_k\}_{k\ge1}$ be the Dirichlet eigenfunctions of $-\Delta$ in $\Omega$
and $V_m:=\mathrm{span}\{\varphi_1,\dots,\varphi_m\}$.
Let $P_m$ be the $L^2$-orthogonal projector onto $V_m$.
We look for $u_m(t)\in V_m$ solving the approximate problem
\begin{equation}\label{eq:galerkin}
u_{m,tt}-\Delta u_m+P_m(u_m^5)+E_m(t)u_{m,t}=0,
\qquad
(u_m(0),u_{m,t}(0))=(P_m u_0,P_m u_1),
\end{equation}
where
\begin{equation}\label{eq:Emdef}
E_m(t):=\frac12\Big(\|\nabla u_m(t)\|_{L^2}^2+\|u_{m,t}(t)\|_{L^2}^2\Big)
+\frac16\|u_m(t)\|_{L^6}^6.
\end{equation}
Since \eqref{eq:galerkin} is an ODE in finite dimension with a locally Lipschitz nonlinearity, it admits a unique smooth
solution on $[0,T]$.

\subsection{Energy identity and uniform bounds}

\begin{lemma}[Energy identity at the Galerkin level]\label{lem:energy-gal}
For every $m$, the Galerkin energy satisfies
\begin{equation}\label{eq:Em-id}
\frac{d}{dt}E_m(t)+E_m(t)\,\|u_{m,t}(t)\|_{L^2}^2=0
\qquad\text{for a.e. }t\in(0,T).
\end{equation}
In particular, $E_m$ is nonincreasing and $0\le E_m(t)\le E_m(0)\le C\,\|(u_0,u_1)\|_{H_0^1\times L^2}^2$,
uniformly in $m$.
\end{lemma}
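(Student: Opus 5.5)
The plan is to test the Galerkin equation \eqref{eq:galerkin} against $u_{m,t}(t)$, which is admissible because $u_{m,t}(t)\in V_m$. Since $u_m$ is smooth in $t$ with values in the finite-dimensional space $V_m\subset H_0^1(\Omega)\cap C^\infty(\overline\Omega)$, every term can be differentiated classically. Taking the $L^2$ inner product with $u_{m,t}$ term by term:
\begin{itemize}
\item $(u_{m,tt},u_{m,t})=\frac12\frac{d}{dt}\|u_{m,t}\|_{L^2}^2$;
\item integrating by parts with the Dirichlet condition, $(-\Delta u_m,u_{m,t})=\frac12\frac{d}{dt}\|\nabla u_m\|_{L^2}^2$;
\item the damping term gives $E_m(t)\|u_{m,t}\|_{L^2}^2$.
\end{itemize}

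The nonlinear term is the only place where the projector matters. Because $P_m$ is the $L^2$-orthogonal projector and $u_{m,t}\in V_m$, we have $(P_m(u_m^5),u_{m,t})=(u_m^5,u_{m,t})$. For fixed $m$, $u_m$ is smooth in $(t,x)$, so $\int_\Omega u_m^5u_{m,t}\,dx=\frac16\frac{d}{dt}\|u_m\|_{L^6}^6$, with differentiation under the integral justified. Summing the four contributions yields \eqref{eq:Em-id}, in fact for every $t$, not only almost every $t$.

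For the consequences, $E_m\ge0$ holds by definition, and \eqref{eq:Em-id} gives $E_m'=-E_m\|u_{m,t}\|_{L^2}^2\le0$, so $E_m$ is nonincreasing. Equivalently, $E_m(t)=E_m(0)\exp\bigl(-\int_0^t\|u_{m,t}\|_{L^2}^2\,ds\bigr)$, which makes both nonnegativity and monotonicity transparent. This identity also shows that $E_m$ stays bounded, so the local ODE solution cannot blow up and extends to all of $[0,T]$.

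The uniform bound on $E_m(0)$ uses three facts:
\begin{itemize}
\item $\|P_mu_1\|_{L^2}\le\|u_1\|_{L^2}$;
\item $\|\nabla P_mu_0\|_{L^2}\le\|\nabla u_0\|_{L^2}$, because $P_m$ is also the orthogonal projector in $H_0^1$ for the eigenbasis;
\item the Sobolev embedding $H_0^1\hookrightarrow L^6$ in 3D, giving $\|P_mu_0\|_{L^6}\le C\|\nabla u_0\|_{L^2}$.
\end{itemize}
Together these give $E_m(0)\le C(\|(u_0,u_1)\|^2+\|(u_0,u_1)\|^6)$.

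The main obstacle is this last step. The sextic term prevents a purely quadratic bound $C\|(u_0,u_1)\|^2_{H_0^1\times L^2}$ unless the constant $C$ is allowed to depend on the size of the data, for instance on an upper bound for $\|\nabla u_0\|_{L^2}$. I would state the constant that way, which still gives uniformity in $m$. Uniformity of the $L^6$ bound itself needs no Fefferman-type boundedness of $P_m$ on $L^6$, since it passes through the $H_0^1$ norm.
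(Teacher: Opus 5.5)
Your proposal is correct and follows the paper's proof: you test the Galerkin equation with $u_{m,t}$, use the $L^2$-orthogonality of $P_m$ together with $u_{m,t}\in V_m$ to drop the projector in the nonlinear term, and sum the four contributions. Your additional points, which the paper leaves implicit, are also right: the energy identity is what makes the Galerkin ODE global on $[0,T]$, and since $E_m(0)\le \frac12\bigl(\|\nabla u_0\|_2^2+\|u_1\|_2^2\bigr)+C\|\nabla u_0\|_2^6$, the stated quadratic bound holds only with a constant depending on the size of the data (consistent with the paper's later notation $C(E_0)$).
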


\begin{proof}
Take the $L^2$-inner product of \eqref{eq:galerkin} with $u_{m,t}$.
Using Dirichlet boundary conditions, we have
$\langle -\Delta u_m,u_{m,t}\rangle=\frac{d}{dt}\frac12\|\nabla u_m\|_2^2$,
and $\langle u_{m,tt},u_{m,t}\rangle=\frac{d}{dt}\frac12\|u_{m,t}\|_2^2$.
Moreover, since $u_{m,t}\in V_m$ and $P_m$ is orthogonal,
\[
\langle P_m(u_m^5),u_{m,t}\rangle=\langle u_m^5,u_{m,t}\rangle
=\frac{d}{dt}\frac16\|u_m\|_6^6.
\]
Finally, $\langle E_m(t)u_{m,t},u_{m,t}\rangle=E_m(t)\|u_{m,t}\|_2^2$.
Summing up gives \eqref{eq:Em-id}.
\end{proof}

As a consequence, for each fixed $T>0$ we have the uniform bounds
\begin{equation}\label{eq:uniform-energy}
\sup_{m}\Big(\|u_m\|_{L^\infty(0,T;H_0^1)}+\|u_{m,t}\|_{L^\infty(0,T;L^2)}\Big)\le C(E_0),
\end{equation}
where $E_0:=E(0)$ is computed from $(u_0,u_1)$.

\subsection{Helly/compactness for the coefficient $E_m(t)$}

\begin{lemma}[BV bound and Helly selection]\label{lem:helly}
The family $\{E_m\}$ is uniformly bounded in $BV(0,T)\cap L^\infty(0,T)$.
In particular, there exist a subsequence (not relabeled) and a function
$E\in BV(0,T)\cap L^\infty(0,T)$ such that
\begin{equation}\label{eq:Em-strong-L1}
E_m\to E\quad\text{strongly in }L^1(0,T)\ \text{and a.e. in }(0,T).
\end{equation}
Moreover, for every $1\le p<\infty$,
\begin{equation}\label{eq:Em-strong-Lp}
E_m\to E\quad\text{strongly in }L^p(0,T).
\end{equation}
\end{lemma}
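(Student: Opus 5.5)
The plan is to use Lemma~\ref{lem:energy-gal}: it makes each $E_m$ nonnegative and nonincreasing, with a bound independent of $m$. From there, Helly's selection theorem and dominated convergence give all three claims.

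\emph{Step 1 (uniform $L^\infty$ and $BV$ bounds).} By Lemma~\ref{lem:energy-gal}, $0\le E_m(t)\le E_m(0)$. Since $P_m u_0\to u_0$ in $H_0^1$ and $P_m u_1 \to u_1$ in $L^2$, with $\|P_m u_0\|_{H_0^1}\le\|u_0\|_{H_0^1}$ and $\|P_m u_1\|_2\le\|u_1\|_2$, the Sobolev embedding $H_0^1\hookrightarrow L^6$ in dimension three gives
\[
E_m(0)\le \tfrac12\|u_0\|_{H_0^1}^2+\tfrac12\|u_1\|_2^2+C\|u_0\|_{H_0^1}^6=:M,
\]
uniformly in $m$. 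Note that the bound is polynomial rather than quadratic, but it is still uniform. Each $E_m$ is smooth and monotone, so its total variation on $(0,T)$ is
\[
\int_0^T|E_m'(t)|\,dt=-\int_0^T E_m'(t)\,dt=E_m(0)-E_m(T)\le M,
\]
using the identity \eqref{eq:Em-id}, which shows $E_m'=-E_m\|u_{m,t}\|_2^2\le0$. Hence $\|E_m\|_{BV(0,T)}\le (T+1)M$.

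\emph{Step 2 (Helly selection).} Apply Helly's selection theorem to the uniformly bounded, nonincreasing functions $E_m$ on $[0,T]$. This yields a subsequence and a nonincreasing function $E$ with $0\le E\le M$ such that $E_m(t)\to E(t)$ for every $t\in[0,T]$, hence in particular a.e. As a monotone bounded function, $E$ lies in $BV(0,T)\cap L^\infty(0,T)$. Alternatively, one can invoke the compact embedding $BV(0,T)\hookrightarrow L^1(0,T)$ and then extract a further subsequence that converges a.e.

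\emph{Step 3 (strong $L^p$ convergence).} Since $|E_m-E|^p\le (2M)^p$ and $E_m\to E$ pointwise, dominated convergence on the finite interval $(0,T)$ gives $E_m\to E$ in $L^p(0,T)$ for every $1\le p<\infty$; the case $p=1$ is \eqref{eq:Em-strong-L1}. To get a single subsequence valid for all $T$, one would take $T=1,2,\dots$ and extract diagonally, but for fixed $T$ this is unnecessary.

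There is no real obstacle here. The only point requiring care is the uniform bound on $E_m(0)$ in Step 1, where the $L^6$ term must be controlled via Sobolev embedding and the $H_0^1$-stability of $P_m$. Note that the statement does not claim $E$ equals the energy of the limit solution; identifying the two is a later and harder task.
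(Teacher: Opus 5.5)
Your proof is correct and follows the paper's argument. Monotonicity from \eqref{eq:Em-id} gives the uniform $BV(0,T)\cap L^\infty(0,T)$ bound, Helly's theorem gives pointwise convergence, and dominated convergence gives the strong convergence. The differences are minor: you get $L^p$ convergence directly by dominated convergence rather than by the paper's $L^1$--$L^\infty$ interpolation, and you correctly note that the uniform bound on $E_m(0)$ contains the sextic term $\|\nabla u_0\|_2^6$ (via $H_0^1$-stability of $P_m$ and Sobolev), which the quadratic bound stated in Lemma~\ref{lem:energy-gal} glosses over.
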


\begin{proof}
From \eqref{eq:Em-id}, we have
\[
E_m'(t)=-E_m(t)\|u_{m,t}(t)\|_2^2\le0,
\]
hence each $E_m$ is monotone non-increasing. In particular,
\[
|E_m'(t)| = -E_m'(t)=E_m(t)\|u_{m,t}(t)\|_2^2.
\]

Using again \eqref{eq:Em-id}, we obtain
\[
\int_0^T |E_m'(t)|\,dt
= E_m(0)-E_m(T)
\le E_m(0)
\le C(E_0),
\]
uniformly in $m$. Therefore $\{E_m\}$ is uniformly bounded in $BV(0,T)$.

On the other hand, since $E_m(t)\ge0$ and $E_m(t)\le E_m(0)$,
the family is also uniformly bounded in $L^\infty(0,T)$:
\[
\|E_m\|_{L^\infty(0,T)} \le E_m(0)\le C(E_0).
\]

Hence $\{E_m\}$ is bounded in $BV(0,T)\cap L^\infty(0,T)$.
By Helly's selection theorem, there exist a subsequence (not relabeled)
and a function $E\in BV(0,T)$ such that
\[
E_m(t)\to E(t) \quad \text{for a.e. } t\in(0,T).
\]

Since the sequence is uniformly bounded in $L^\infty(0,T)$,
Lebesgue's dominated convergence theorem yields
\[
E_m\to E \quad \text{strongly in } L^1(0,T),
\]
which proves \eqref{eq:Em-strong-L1}.

Finally, combining the strong $L^1$ convergence with the uniform
$L^\infty$ bound, interpolation gives, for any $1\le p<\infty$,
\[
\|E_m-E\|_{L^p}
\le
\|E_m-E\|_{L^1}^{1/p}
\|E_m-E\|_{L^\infty}^{1-1/p}
\longrightarrow 0,
\]
which yields \eqref{eq:Em-strong-Lp}.

We emphasize that no additional compactness mechanism is required here:
the bounded variation structure alone provides the necessary strong
compactness of the coefficient.
\end{proof}
\subsection{Compactness for $(u_m,u_{m,t})$ and identification of limits}

From \eqref{eq:uniform-energy}, up to subsequences,
\begin{align}
u_m &\rightharpoonup^\ast u \quad \text{in }L^\infty(0,T;H_0^1(\Omega)),\label{eq:um-weak}\\
u_{m,t} &\rightharpoonup^\ast u_t \quad \text{in }L^\infty(0,T;L^2(\Omega)).\label{eq:umt-weak}
\end{align}
Moreover, since $u_{m,tt}$ is bounded in $L^1(0,T;H^{-1}(\Omega))$
(indeed $-\Delta u_m\in L^\infty H^{-1}$, $u_m^5\in L^\infty L^{6/5}\hookrightarrow L^\infty H^{-1}$,
and $E_m u_{m,t}\in L^1 L^2$),
the Aubin--Lions lemma yields
\begin{equation}\label{eq:um-strong-L2}
u_m\to u\quad\text{strongly in }C([0,T];L^2(\Omega)).
\end{equation}
In particular, $u_m\to u$ a.e. in $(0,T)\times\Omega$.
Since $\{u_m\}$ is bounded in $L^\infty(0,T;L^6(\Omega))$, we also have
\begin{equation}\label{eq:um5-weak}
u_m^5 \rightharpoonup u^5 \quad \text{weakly in }L^\infty(0,T;L^{6/5}(\Omega)).
\end{equation}

\subsection{Passing to the limit in the damping term $E_m u_{m,t}$}

\begin{lemma}[Key convergence: $E_m u_{m,t}\to E u_t$]\label{lem:key-product}
Let $E_m\to E$ strongly in $L^2(0,T)$ and $u_{m,t}\rightharpoonup u_t$ weakly in
$L^2(0,T;L^2(\Omega))$. Then
\begin{equation}\label{eq:product-limit}
E_m u_{m,t}\rightharpoonup E u_t \quad \text{weakly in }L^1(0,T;L^2(\Omega)).
\end{equation}
\end{lemma}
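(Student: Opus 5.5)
The plan is the standard ``strong times weak'' argument. First I pin down which weak topology is meant. The dual of $L^1(0,T;L^2(\Omega))$ is $L^\infty(0,T;L^2(\Omega))$, so I must show that for every test function $\phi\in L^\infty(0,T;L^2(\Omega))$,
\[
\int_0^T\!\!\int_\Omega E_m u_{m,t}\,\phi\,dx\,dt\longrightarrow\int_0^T\!\!\int_\Omega E\, u_t\,\phi\,dx\,dt .
\]
Both sides make sense. The sequence $E_m u_{m,t}$ is bounded in $L^2(0,T;L^2(\Omega))\subset L^1(0,T;L^2(\Omega))$, since $\|E_m\|_{L^\infty}\le C(E_0)$ by Lemma~\ref{lem:helly} and $u_{m,t}$ is bounded by \eqref{eq:uniform-energy}.

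I split the difference as
\[
E_m u_{m,t}-E u_t=(E_m-E)\,u_{m,t}+E\,(u_{m,t}-u_t),
\]
and treat the two terms separately.

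\textbf{First term.} By Cauchy--Schwarz in space and then in time,
\[
\Big|\int_0^T\!\!\int_\Omega (E_m-E)u_{m,t}\phi\Big|\le \|E_m-E\|_{L^2(0,T)}\,\sup_m\|u_{m,t}\|_{L^2(0,T;L^2)}\,\|\phi\|_{L^\infty(0,T;L^2)}.
\]
The middle factor is bounded uniformly in $m$: this follows from the hypothesis, since weakly convergent sequences are bounded, or directly from \eqref{eq:uniform-energy}. The first factor tends to $0$ by the assumed strong convergence in $L^2(0,T)$, which Lemma~\ref{lem:helly} supplies.

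\textbf{Second term.} I set $\psi:=E\phi$. Because $E\in L^\infty(0,T)$ and $\phi\in L^\infty(0,T;L^2)$, we have $\psi\in L^\infty(0,T;L^2)\subset L^2(0,T;L^2)$. Hence
\[
\int_0^T\!\!\int_\Omega E(u_{m,t}-u_t)\phi=\langle u_{m,t}-u_t,\psi\rangle_{L^2(0,T;L^2)}\to0
\]
by the weak convergence of $u_{m,t}$ in $L^2(0,T;L^2(\Omega))$.

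The hypothesis only states $E\in L^2$. I will therefore use $E\in L^\infty$ from Lemma~\ref{lem:helly}, which is the setting where the lemma is applied. If one insists on $E\in L^2$ only, I would instead truncate: write $E=E\mathbf 1_{|E|\le k}+E\mathbf 1_{|E|>k}$. The tail is controlled by $\|E\mathbf 1_{|E|>k}\|_{L^2}$ times the same uniform bounds as in the first term, and this tends to $0$ as $k\to\infty$.

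\textbf{Main obstacle.} The only delicate point is this second term, specifically making sure $E\phi$ is an admissible test function for the weak convergence of $u_{m,t}$. The first term is immediate once the uniform bound on $u_{m,t}$ is available. Adding the two limits gives \eqref{eq:product-limit}. In fact the argument also yields weak convergence in $L^2(0,T;L^2(\Omega))$, because the test class $L^2(0,T;L^2)$ works equally well when $E\in L^\infty$.
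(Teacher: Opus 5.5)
Your proof is correct and follows the paper's argument: the same splitting into $(E_m-E)u_{m,t}$ and $E(u_{m,t}-u_t)$, Cauchy--Schwarz in time for the first term, and weak convergence tested against $E\phi$ for the second. The point you flagged as delicate is not actually an obstacle. Since $\int_0^T |E(t)|^2\|\phi(t)\|_{L^2}^2\,dt\le \|E\|_{L^2(0,T)}^2\|\phi\|_{L^\infty(0,T;L^2)}^2$, the hypothesis $E\in L^2(0,T)$ alone already makes $E\phi$ an admissible test function in $L^2(0,T;L^2(\Omega))$, which is exactly how the paper argues, so neither $E\in L^\infty$ nor your truncation is needed.
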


\begin{proof}
Fix $\phi\in L^\infty(0,T;L^2(\Omega))$.
Write
\[
\int_0^T\!\!\langle E_m u_{m,t}-E u_t,\phi\rangle\,dt
=
\int_0^T\!\!\langle (E_m-E)u_{m,t},\phi\rangle\,dt
+
\int_0^T\!\!\langle E(u_{m,t}-u_t),\phi\rangle\,dt
=:I_{m,1}+I_{m,2}.
\]
For $I_{m,1}$, by Cauchy--Schwarz in time,
\[
|I_{m,1}|
\le \|E_m-E\|_{L^2(0,T)}\,\|u_{m,t}\|_{L^2(0,T;L^2)}\,\|\phi\|_{L^\infty(0,T;L^2)}.
\]
Since $\|u_{m,t}\|_{L^2L^2}\le C(E_0)\sqrt{T}$ and
$\|E_m-E\|_{L^2}\to0$ by Lemma~\ref{lem:helly}, we get $I_{m,1}\to0$.
For $I_{m,2}$, note that $E\in L^2(0,T)$ and $\phi\in L^\infty(0,T;L^2)$ imply
$E\phi\in L^2(0,T;L^2)$. Since $u_{m,t}\rightharpoonup u_t$ weakly,
we obtain $I_{m,2}\to0$.
\end{proof}

\subsection{Limit equation}

\begin{theorem}[Existence of an energy weak solution]\label{thm:weak-existence}
There exists $u$ such that
\[
(u,u_t)\in L^\infty(0,T;H_0^1(\Omega)\times L^2(\Omega)),\qquad
u\in L^\infty(0,T;L^6(\Omega)),
\]
$u$ solves \eqref{eq:main-weak} in the sense of distributions with coefficient $E(t)$
defined by \eqref{eq:Edef}, and the energy identity holds in the weak form
\[
E(t)+\int_0^t E(s)\|u_t(s)\|_{L^2}^2\,ds \le E(0)\qquad\forall\,t\in[0,T].
\]
\end{theorem}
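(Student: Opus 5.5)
The plan is to pass to the limit in \eqref{eq:galerkin} with the Helly limit $\bar E$ of Lemma~\ref{lem:helly} as coefficient, and then to prove that $\bar E(t)$ coincides with the energy $E_u(t)$ of the limit $u$ given by \eqref{eq:Edef}. The identification is carried out by squeezing $\bar E$ between two inequalities.

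\emph{Step 1: limit equation with coefficient $\bar E$.} Fix $\psi\in V_k$ and $\theta\in C_c^\infty(0,T)$, and test \eqref{eq:galerkin} with $\theta\psi$ for $m\ge k$. Using \eqref{eq:um-weak}, \eqref{eq:umt-weak} and \eqref{eq:um5-weak}, together with Lemma~\ref{lem:key-product} (with $\bar E$ in place of $E$), we get
\[
u_{tt}-\Delta u+u^5+\bar E(t)u_t=0
\]
in $\mathcal D'((0,T)\times\Omega)$. The initial data are recovered from \eqref{eq:um-strong-L2} and the weak continuity of $u_t$ in $H^{-1}$.

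\emph{Step 2: the two inequalities.} Write $E_u(t)$ for the energy \eqref{eq:Edef} of $u$.
\begin{enumerate}[label=(\roman*)]
\item \emph{Lower bound.} For a.e.\ $t$ we have $u_m(t)\rightharpoonup u(t)$ in $H_0^1$ and $u_{m,t}(t)\rightharpoonup u_t(t)$ in $L^2$; this uses the $C([0,T];L^2)$ convergence and uniform bounds along a further subsequence. Weak lower semicontinuity of norms and Fatou's lemma for $\|u_m\|_6^6$ then give $E_u(t)\le \liminf E_m(t)=\bar E(t)$.
\item \emph{Upper bound.} Integrate \eqref{eq:Em-id} to get
\[
E_m(t)=E_m(0)-\int_0^t\|\sqrt{E_m}\,u_{m,t}\|_{L^2}^2\,ds .
\]
Since $E_m\to\bar E$ strongly in $L^2(0,T)$, the argument of Lemma~\ref{lem:key-product} gives $\sqrt{E_m}\,u_{m,t}\rightharpoonup\sqrt{\bar E}\,u_t$ weakly in $L^2(0,t;L^2)$. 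By weak lower semicontinuity and $E_m(0)\to E(0)$,
\[
\bar E(t)\le E(0)-\int_0^t \bar E\,\|u_t\|_{L^2}^2\,ds .
\]
\end{enumerate}

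\emph{Step 3: closing the argument.} Suppose the limit $u$ satisfies the energy identity for the equation of Step 1, namely
\[
E_u(t)=E(0)-\int_0^t\bar E\|u_t\|_2^2\,ds .
\]
Then (ii) reads $\bar E\le E_u$, and combined with (i) this gives $\bar E=E_u$. Hence $u$ solves \eqref{eq:main-weak} with coefficient \eqref{eq:Edef}, and the stated energy relation holds; in fact it holds with equality.

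\emph{Main obstacle.} The difficulty is the energy identity for $u$, which is a statement about the limit solution of a linear equation with the fixed bounded coefficient $\bar E$. In the critical case the pairing $\langle u^5,u_t\rangle$ is not defined for a mere finite-energy weak solution. I would first try to show $u\in L^5(0,T;L^{10}(\Omega))$. To do this, treat $u$ as a solution of the linear wave equation with forcing $-u^5-\bar E u_t$, and use the Strichartz estimates on bounded domains (Smith--Sogge, Blair--Smith--Sogge) combined with Christ--Kiselev for the Duhamel term. The term $\bar E u_t\in L^1L^2$ is harmless. The term $u^5$ needs a continuity/bootstrap argument on short time intervals, whose length is chosen so that the $L^5L^{10}$ norm is small. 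This is where non-concentration of energy (Shatah--Struwe) must enter; I expect that point to be the real technical core.

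Once $u\in L^5L^{10}$ is known, $u^5\in L^1L^2$. The energy identity then follows by regularizing in time (or by a smooth spectral truncation) and passing to the limit, since every pairing is now well defined.

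As a fallback, if the bootstrap cannot be closed at this stage, I would try to prove equality in (ii) directly. The route is to show strong convergence $(u_m,u_{m,t})\to(u,u_t)$ in $L^2(0,T;H_0^1\times L^2)$ by a compensated-compactness/defect-measure argument in the spirit of G\'erard. That argument would also yield $\bar E=E_u$.
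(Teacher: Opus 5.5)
\textbf{Verdict: there is a genuine gap, at exactly the point you call the ``main obstacle'', and the fix you sketch would not work as written.}

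Your Step~1 and the inequalities (i)--(ii) reproduce the paper's proof. That proof only passes to the limit in \eqref{eq:galerkin} via Lemma~\ref{lem:key-product} and then invokes lower semicontinuity. In doing so it tacitly takes the Helly limit of Lemma~\ref{lem:helly} to be the energy \eqref{eq:Edef} of the limit $u$.

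You are right that this identification is not automatic, since weak convergence only gives $E_u\le\bar E$. The energy inequality in the statement does not need it: (i) and (ii) already give $E_u(t)+\int_0^tE_u\|u_t\|_2^2\,ds\le\bar E(t)+\int_0^t\bar E\|u_t\|_2^2\,ds\le E(0)$. But the claim that $u$ solves \eqref{eq:main-weak} with coefficient \eqref{eq:Edef} rests entirely on the hypothesis of your Step~3, and the proposal never proves it.

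\textbf{Why the bootstrap on $u$ fails.} A continuity argument in $t\mapsto\|u\|_{L^5(0,t;L^{10})}$ needs this quantity to be finite and continuous to begin with. For a weak limit you only know $u\in L^\infty(0,T;H_0^1)$, which gives no control of $u^5$ in $L^1L^2$. Applying Strichartz to $u$ itself is therefore circular.

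\textbf{What does work on a first interval.} Run the bootstrap on the smooth Galerkin solutions, with $m$-independent constants. Since $P_m$ is an $L^2$-contraction,
\[
\|u_m\|_{L^5(I;L^{10})}\le\|W(\cdot)P_m(u_0,u_1)\|_{L^5(I;L^{10})}+C\bigl(\|u_m\|_{L^5(I;L^{10})}^5+|I|\bigr),
\]
where the $|I|$ term comes from the damping and $C$ depends on $\sup_mE_m(0)$. The free term is small uniformly in $m$ on a short $I=[0,T_1]$, because $P_m(u_0,u_1)\to(u_0,u_1)$ in $H_0^1\times L^2$. This gives $u\in L^5(0,T_1;L^{10})$. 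Hence the energy identity and $\bar E=E_u$ hold on $[0,T_1]$. Each of the three lower semicontinuous pieces of $E_m(T_1)$ must then converge, so $(u_m,u_{m,t})(T_1)$ converges strongly in $H_0^1\times L^2$ and the step can be repeated.

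\textbf{What is still missing.} You need a lower bound on the step lengths, i.e.\ an a priori bound $\|u\|_{L^5(0,T;L^{10})}<\infty$ for every $T$. This is a Shatah--Struwe/Grillakis non-concentration argument: Morawetz and flux identities on backward cones, including cones with vertex on $\partial\Omega$ as in Burq--Lebeau--Planchon, with $E(t)u_t$ treated as a perturbation. Neither your proposal nor the paper's proof of Theorem~\ref{thm:weak-existence} supplies this; the paper only takes up non-concentration later, in Theorem~\ref{thm:large-data}. Your defect-measure fallback does not avoid it either, because in the critical case the vanishing of the defect measure requires the same non-concentration input.
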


\begin{proof}[Proof (passage to the limit)]
Let $\psi\in C_c^\infty((0,T)\times\Omega)$.
Multiply \eqref{eq:galerkin} by $\psi$ and integrate over $(0,T)\times\Omega$.
Passing to the limit term by term uses the weak-$\ast$ convergences \eqref{eq:um-weak}--\eqref{eq:umt-weak}, the identification $u_m^5\rightharpoonup u^5$ in \eqref{eq:um5-weak}, and the key product convergence Lemma~\ref{lem:key-product} for $E_m u_{m,t}$. Thus $u$ satisfies \eqref{eq:main-weak}. Lower semicontinuity yields the weak energy inequality.
\end{proof}

\subsection{A Second A Priori Estimate and Small Regular Data (Pure Galerkin)}

In this section we derive a higher--order energy estimate using only the
Galerkin approximation and Sobolev/Gagliardo--Nirenberg inequalities.
This yields local strong solutions for regular data, and global strong
solutions under a smallness assumption, without invoking Strichartz
estimates.

\medskip
Let $V_m:=\mathrm{span}\{\varphi_1,\dots,\varphi_m\}$, where
$\{-\Delta \varphi_k=\lambda_k^2\varphi_k\}$ is the Dirichlet basis.
We consider the \emph{pure} Galerkin system
\begin{equation}\label{eq:galerkin-pure}
u_{m,tt}-\Delta u_m + P_m(u_m^5) + E_m(t)u_{m,t}=0,
\qquad
(u_m(0),u_{m,t}(0))=(P_mu_0,P_mu_1),
\end{equation}
where $P_m$ denotes the $L^2(\Omega)$--orthogonal projection onto $V_m$.
For each $m$, the solution satisfies $u_m(t)\in V_m$ for all $t$.

\subsubsection*{Higher--order energy}

Define the higher--order (regular) energy
\begin{equation}\label{eq:E1-def}
E_{m,1}(t):=\frac12\|\nabla u_{m,t}(t)\|_{L^2(\Omega)}^2
+\frac12\|\Delta u_m(t)\|_{L^2(\Omega)}^2.
\end{equation}

\subsubsection*{Estimating the critical term}

Since $\nabla(u_m^5)=5u_m^4\nabla u_m$, Hölder's inequality gives
\begin{equation}\label{eq:nonlin1}
\|\nabla(u_m^5)\|_2
\le
5\|u_m\|_\infty^4\,\|\nabla u_m\|_2.
\end{equation}

In dimension $3$, we apply Agmon's inequality (a Gagliardo--Nirenberg type interpolation) coupled with standard elliptic estimates, which yields
\begin{equation}\label{eq:H2-Linf}
\|u_m\|_\infty \le C\|u_m\|_{H^2}^{1/2} \|u_m\|_{H^1}^{1/2}
\le C\|\Delta u_m\|_2^{1/2} \|\nabla u_m\|_2^{1/2},
\end{equation}
with a constant $C>0$ depending only on $\Omega$. Taking the fourth power gives $\|u_m\|_\infty^4 \le C\|\Delta u_m\|_2^2 \|\nabla u_m\|_2^2$.

Moreover, the basic (first) energy estimate implies
\begin{equation}\label{eq:energy-basic}
\|\nabla u_m(t)\|_2+\|u_{m,t}(t)\|_2\le C(E_0) \qquad \forall\,t\ge0,
\end{equation}
hence $\|\nabla u_m(t)\|_2 \le \sqrt{2E_0}$.

Combining \eqref{eq:nonlin1}--\eqref{eq:energy-basic}, we obtain
\begin{equation}\label{eq:nonlin2}
\|\nabla(u_m^5)\|_2
\le
C\, \|\Delta u_m\|_2^2 \|\nabla u_m\|_2^3
\le
C_1\,E_0^{3/2}\,E_{m,1}(t).
\end{equation}

Plugging \eqref{eq:nonlin2} into \eqref{eq:nonlin1} and using
$\|\nabla u_{m,t}\|_2\le (2E_{m,1}(t))^{1/2}$ yields the nonlinear differential inequality for the strong energy:
\begin{equation}\label{eq:ODE-E1}
\frac{d}{dt}E_{m,1}(t)
\le
K\,E_0^{3/2}\,E_{m,1}(t)^{3/2},
\end{equation}
where $K > 0$ is a universal constant independent of $m$.

\subsubsection*{Local strong solutions and the critical obstruction}

\begin{theorem}[Uniform local bounds for the Galerkin sequence]\label{thm:local-galerkin}
For any initial data, the strong energy $E_{m,1}(t)$ remains uniformly bounded on a nontrivial local time interval $[0,T_*)$, where $T_* > 0$ depends on the initial energies $E_0$ and $E_{m,1}(0)$. Consequently, the standard Galerkin approximations $u_m$ are locally strong solutions.
\end{theorem}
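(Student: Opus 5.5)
The plan is to close a Riccati-type comparison argument for the higher-order energy \eqref{eq:E1-def}, using the differential inequality \eqref{eq:ODE-E1}, and to make sure every constant is independent of $m$.

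\textbf{Step 1: the higher-order energy identity.} First I derive \eqref{eq:ODE-E1} rigorously at the Galerkin level. Since $V_m$ is spanned by Dirichlet eigenfunctions, $-\Delta u_{m,t}\in V_m$ is an admissible test function, and $-\Delta$ commutes with $P_m$. Testing \eqref{eq:galerkin-pure} with $-\Delta u_{m,t}$ then gives
\[
\frac{d}{dt}E_{m,1}(t)+E_m(t)\|\nabla u_{m,t}\|_2^2=-\langle \nabla(u_m^5),\nabla u_{m,t}\rangle .
\]
The damping term has a good sign because $E_m\ge0$ by Lemma~\ref{lem:energy-gal}, so I can drop it. For the right-hand side I use Cauchy--Schwarz together with \eqref{eq:nonlin1}--\eqref{eq:energy-basic}. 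The one point to check is the elliptic estimate $\|u_m\|_{H^2}\le C\|\Delta u_m\|_2$; it holds because $u_m\in V_m\subset H^2\cap H^1_0$ and $\Omega$ is smooth. This bounds the right-hand side by $C\|\Delta u_m\|_2^2\|\nabla u_m\|_2^3\|\nabla u_{m,t}\|_2\le K E_0^{3/2}E_{m,1}^{3/2}$. The constant $K$ depends only on $\Omega$, and the bound on $\|\nabla u_m\|_2$ is uniform in $m$ since $E_m(0)\le C E_0$.

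\textbf{Step 2: comparison with the scalar ODE.} Set $y=E_{m,1}$ and $A=KE_0^{3/2}$, so that $y'\le Ay^{3/2}$. As long as $y>0$, this is equivalent to
\[
\frac{d}{dt}\bigl(y^{-1/2}\bigr)\ge -\frac A2 .
\]
The case $y=0$ can be handled by working with $y+\varepsilon$ and letting $\varepsilon\to0$. Integrating gives
\[
E_{m,1}(t)\le \Bigl(E_{m,1}(0)^{-1/2}-\tfrac A2 t\Bigr)^{-2},
\qquad t<T_m^*:=\frac{2}{A\,E_{m,1}(0)^{1/2}} .
\]

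\textbf{Step 3: uniformity in $m$.} This is the main obstacle, since $T_*$ must not shrink as $m\to\infty$. I assume regular data $(u_0,u_1)\in (H^2\cap H_0^1)\times H_0^1$, which is needed for $E_{m,1}(0)$ to be finite uniformly. Spectral projection onto eigenfunctions is a contraction in the graph norms of $-\Delta$ and of $(-\Delta)^{1/2}$, so $E_{m,1}(0)\le E_1(0)$. Choose $T_*=\frac{1}{A E_1(0)^{1/2}}$, which is half of the uniform lower bound for $T_m^*$. Then on $[0,T_*]$ we get $E_{m,1}(t)\le 4E_1(0)$ for all $m$. The ODE solution itself exists there, because the $H^2$ bound controls the finite-dimensional system.

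\textbf{Step 4: passage to the limit.} The uniform bounds give $u_m$ bounded in $L^\infty(0,T_*;H^2\cap H_0^1)$ and $u_{m,t}$ bounded in $L^\infty(0,T_*;H_0^1)$. By Aubin--Lions, $u_m\to u$ strongly in $C([0,T_*];H^1_0)$. This upgrades the convergence of $E_m$ to uniform convergence, and gives strong convergence of $u_m^5$. The limit is therefore a local strong solution, and the argument of Theorem~\ref{thm:weak-existence} carries over.

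I would point out explicitly that $T_*$ depends on $E_1(0)$ and degenerates for large data. This is the critical obstruction announced in the subsection title, which later requires the Strichartz approach.
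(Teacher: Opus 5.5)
Your proposal is correct and takes the same route as the paper: the paper integrates the Bernoulli inequality \eqref{eq:ODE-E1} to get $E_{m,1}(t)\le\bigl(E_{m,1}(0)^{-1/2}-\tfrac12 K E_0^{3/2}t\bigr)^{-2}$ below the time $2/(K E_0^{3/2}E_{m,1}(0)^{1/2})$, which is what you do. Your extra care is well placed, since the paper lets $T_*$ depend on $E_{m,1}(0)$ yet claims uniformity in $m$; that claim genuinely needs your assumption $(u_0,u_1)\in(H^2\cap H_0^1)\times H_0^1$, which gives $E_{m,1}(0)\le E_1(0)$ and an $m$-independent $T_*$.
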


\begin{proof}
Integrating the Bernoulli-type differential inequality \eqref{eq:ODE-E1} directly with respect to time yields
\[
E_{m,1}(t) \le \frac{1}{\left[ E_{m,1}(0)^{-1/2} - \frac{1}{2} K E_0^{3/2} t \right]^2}.
\]
This algebraic relation provides a strict uniform bound for the strong energy as long as the term in brackets remains strictly positive. Thus, there exists a maximal local time of existence
\[
T_* = \frac{2}{K E_0^{3/2} E_{m,1}(0)^{1/2}} > 0,
\]
such that $\sup_{t \in [0, T_* - \delta]} E_{m,1}(t) \le C < \infty$ uniformly in $m$. On this interval, we guarantee that $u_m \in L^\infty(0, T_*-\delta; H^2(\Omega) \cap H_0^1(\Omega))$ and $u_{m,t} \in L^\infty(0, T_*-\delta; H_0^1(\Omega))$.
\end{proof}

\begin{remark}[The transition to Strichartz estimates]
Notice that even for small initial data, the time $T_*$ generated by the purely energy-based ODE \eqref{eq:ODE-E1} is large, but strictly finite. This happens because the right-hand side is superlinear and the nonlinear damping $E(t)u_t$ does not provide a linear stabilizer $-c E_{m,1}(t)$ for the full $H^2$ topology. This algebraic obstruction confirms that standard energy methods alone cannot yield global strong solutions for this critical problem. To push the solution globally without finite-time concentration, one must incorporate dispersive properties via nonhomogeneous Strichartz estimates, which we implement in the subsequent sections.
\end{remark}

\subsubsection*{Regular solutions on arbitrary intervals for small data}

\begin{theorem}[Regular solutions on [0,T] via standard Galerkin]\label{thm:small-data-galerkin}
Let $T > 0$ be an arbitrary, uniformly fixed time. There exists a smallness threshold $\varepsilon(T) > 0$ such that if the initial energies satisfy
\[
E_0 \le \varepsilon(T) \quad \text{and} \quad E_{m,1}(0) \le \varepsilon(T),
\]
then the standard Galerkin sequence $u_m$ admits a uniform strong energy bound on the entire interval $[0,T]$. Consequently, for sufficiently small regular data, the problem admits a regular solution on $[0,T]$ without the need for dispersive estimates.
\end{theorem}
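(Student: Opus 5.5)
The plan is to compare the strong energy with the explicit supersolution of the Bernoulli-type inequality \eqref{eq:ODE-E1}, and to choose $\varepsilon(T)$ so that the blow-up time $T_*$ of Theorem~\ref{thm:local-galerkin} is larger than $T$. The constant $K$ in \eqref{eq:ODE-E1} does not depend on $m$. The bound $\|\nabla u_m(t)\|_2\le\sqrt{2E_0}$ comes from Lemma~\ref{lem:energy-gal} and \eqref{eq:energy-basic}. Here $E_0$ should be read as $\sup_m E_m(0)$, which is at most $C E(0)$. Integrating \eqref{eq:ODE-E1} on any interval where $E_{m,1}$ is finite gives
\[
E_{m,1}(t)\le \Big[E_{m,1}(0)^{-1/2}-\tfrac12 K E_0^{3/2}t\Big]^{-2}
\]
as long as the bracket is positive. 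Positivity of the bracket on $[0,T]$ is exactly the condition $\tfrac12 K E_0^{3/2}E_{m,1}(0)^{1/2}\,T<1$.

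Next I fix the threshold. Under the hypothesis $E_0,E_{m,1}(0)\le\varepsilon$, we have $\tfrac12 K E_0^{3/2}E_{m,1}(0)^{1/2}T\le \tfrac12 K\varepsilon^2 T$. So I choose
\[
\varepsilon(T):=\min\Big\{1,\ (KT)^{-1/2}\Big\}.
\]
With this choice the bracket is at least $E_{m,1}(0)^{-1/2}/2$ for all $t\in[0,T]$. Hence
\[
\sup_{t\in[0,T]}E_{m,1}(t)\le 4E_{m,1}(0)\le 4\varepsilon(T),
\]
uniformly in $m$.

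To make the comparison rigorous I use a continuity argument. For fixed $m$ the solution is a smooth finite-dimensional ODE solution, so $E_{m,1}$ is $C^1$ on its interval of existence. The Galerkin solution also stays global, since the first energy in Lemma~\ref{lem:energy-gal} bounds every coefficient in the finite-dimensional space. On $V_m$ all norms are equivalent, so $E_{m,1}$ is finite for every $t$. The differential inequality then integrates cleanly: with $y=E_{m,1}^{-1/2}$ one gets $y'\ge -\tfrac12KE_0^{3/2}$. If $E_{m,1}(0)=0$ the data vanish and the statement is trivial.

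Finally I pass to the limit. The uniform bounds $u_m\in L^\infty(0,T;H^2\cap H_0^1)$ and $u_{m,t}\in L^\infty(0,T;H_0^1)$ give weak-$*$ limits with the same regularity, and the limit is the weak solution of Theorem~\ref{thm:weak-existence}. By lower semicontinuity the limit keeps the bound $E_1(t)\le 4\varepsilon(T)$, which makes it a regular solution on $[0,T]$.

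The main obstacle is not the ODE comparison but the validity of \eqref{eq:ODE-E1} itself at the Galerkin level. The derivative of $E_{m,1}$ contains $\langle \nabla P_m(u_m^5),\nabla u_{m,t}\rangle$, which must equal $\langle -\Delta u_m^5,u_{m,t}\rangle$. This relies on $P_m$ commuting with $-\Delta$ on the eigenbasis. The damping contributes $-E_m\|\nabla u_{m,t}\|_2^2\le0$, which has a favorable sign. The hypothesis $E_{m,1}(0)\le\varepsilon$ must be read uniformly in $m$. This holds if $\|\nabla u_1\|_2^2+\|\Delta u_0\|_2^2\le 2\varepsilon$, because the spectral projector is a contraction in these norms.
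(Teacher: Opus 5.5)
Your proposal is correct and follows essentially the same route as the paper. Both arguments integrate the Bernoulli-type inequality \eqref{eq:ODE-E1} through $E_{m,1}^{-1/2}$, choose $\varepsilon(T)\sim (KT)^{-1/2}$ so that the blow-up time exceeds $T$ (the paper takes $\varepsilon(T)=\tfrac12\sqrt{2/(KT)}$), and pass to the limit by weak-$*$ compactness. Your extra points are sound refinements of details the paper leaves implicit: global existence of the Galerkin ODE, reading $E_0$ as $\sup_m E_m(0)$, uniformity in $m$ of $E_{m,1}(0)\le\varepsilon$ via the contraction property of $P_m$, and the explicit bound $\sup_{[0,T]}E_{m,1}\le 4E_{m,1}(0)$.
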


\begin{proof}
From our previous estimates, the strong energy $E_{m,1}(t)$ satisfies the nonlinear differential inequality:
\begin{equation}\label{eq:ODE-E1-small}
\frac{d}{dt}E_{m,1}(t) \le K\, E_0^{3/2}\, E_{m,1}(t)^{3/2},
\end{equation}
where $K > 0$ is a universal constant independent of $m$. This is a classical Bernoulli-type differential inequality. Integrating it directly over time, we obtain the algebraic bound:
\[
E_{m,1}(t) \le \frac{1}{\left[ E_{m,1}(0)^{-1/2} - \frac{1}{2} K E_0^{3/2} t \right]^2}.
\]
To guarantee that the strong energy remains uniformly bounded for all $t \in [0,T]$, we must ensure that the denominator is strictly bounded away from zero on this interval. This means the maximal blow-up time $T_{max}$ must strictly exceed $T$, which is equivalent to the condition:
\[
\frac{1}{2} K E_0^{3/2} T < E_{m,1}(0)^{-1/2}.
\]
By imposing the smallness assumption $E_0 \le \varepsilon$ and $E_{m,1}(0) \le \varepsilon$, this requirement becomes:
\[
\frac{1}{2} K \varepsilon^{3/2} T < \varepsilon^{-1/2} \iff \varepsilon^2 < \frac{2}{K T}.
\]
Therefore, we can explicitly define the threshold $\varepsilon(T) := \frac{1}{2}\sqrt{\frac{2}{KT}}$. For initial data bounded by this $\varepsilon(T)$, the term inside the brackets remains strictly positive on $[0,T]$.

This yields a uniform bound for the sequence $u_m$ in $L^\infty(0,T; H^2(\Omega) \cap H_0^1(\Omega))$ and for $u_{m,t}$ in $L^\infty(0,T; H_0^1(\Omega))$. Standard compactness arguments then allow us to pass to the limit as $m \to \infty$, obtaining a strong solution on the arbitrary interval $[0,T]$ using purely energy-based Galerkin approximations.
\end{proof}

\medskip

In the next sections, we transition from energy-based approximations (standard Galerkin) to spectral-multiplier-based approximations to recover Strichartz estimates, and finally to exact strong solutions to establish global regularity via stability theory.

\section{Existence and Shatah--Struwe regularity}

In this section, we construct strong solutions for the critical problem \eqref{eq:main-weak}.

\subsection{Smooth spectral approximation (Littlewood-Paley type)}

While the standard Galerkin projector $P_m$ used in Section 1 is perfectly suited to obtain energy weak solutions, it is notoriously ill-behaved in $L^p(\Omega)$ spaces for $p \neq 2$ due to the Fefferman ball multiplier theorem. This prevents uniform Strichartz bounds when dealing with critical nonlinearities. To obtain Shatah-Struwe regular solutions, we introduce a smooth spectral cut-off.

Let $\{\varphi_k\}_{k\ge1}$ be the Dirichlet eigenfunctions of $-\Delta$ in $\Omega$ with eigenvalues $\lambda_k^2$. Let $\chi \in C_c^\infty(\mathbb{R})$ be an even smooth cut-off such that $0 \le \chi \le 1$, $\chi(s) = 1$ for $|s| \le 1$, and $\chi(s) = 0$ for $|s| \ge 2$. We define the smooth spectral multiplier $S_m$ (see Appendix of this manuscript) acting on $L^2(\Omega)$ by:
\begin{equation}\label{eq:Sm-def}
S_m v := \sum_{k=1}^\infty \chi\left(\frac{\lambda_k}{m}\right) \langle v, \varphi_k \rangle \varphi_k.
\end{equation}
By the Mikhlin--H\"ormander multiplier theorem on bounded domains (see e.g., \cite{BurqLebeauPlanchon}), the operators $S_m$ are uniformly bounded in $L^p(\Omega)$ for all $1 < p < \infty$. That is, there exists $C_p > 0$ independent of $m$ such that
\begin{equation}\label{eq:Sm-Lp}
\|S_m v\|_{L^p(\Omega)} \le C_p \|v\|_{L^p(\Omega)}.
\end{equation}

We consider the approximate regularized problem for $u_m$:
\begin{equation}\label{eq:approx}
u_{m,tt}-\Delta u_m+ S_m(u_m^5) + S_m(E_m(t)u_{m,t})=0,
\qquad
(u_m(0),u_{m,t}(0))=(S_m u_0, S_m u_1),
\end{equation}
where $E_m(t)$ is the energy of $u_m$. Since the nonlinearity is smoothly truncated, this problem admits a unique global strong solution, and its energy satisfies $E_m(t) \le E_m(0) \le C E_0$.

\subsection{Shatah--Struwe solutions and Linear Strichartz estimates}

\begin{definition}[Shatah--Struwe solution on [0,T]]\label{def:SS}
A function $u$ is a Shatah--Struwe (SS) solution to \eqref{eq:main-weak} on $[0,T]$
if $(u,u_t)\in C([0,T];H_0^1(\Omega)\times L^2(\Omega))$, $u\in L^5(0,T;L^{10}(\Omega)) \cap L^4(0,T;L^{12}(\Omega))$, and $u$ solves \eqref{eq:main-weak} in the sense of distributions.
\end{definition}

For the approximation \eqref{eq:approx}, applying the Duhamel formula and the Dirichlet Strichartz estimates from Burq--Lebeau--Planchon \cite{BurqLebeauPlanchon}, Blair--Schmit--Sogge \cite{BlairSmithSogge} on an interval $I=[t_0, t_0+\tau]$, we obtain:
\begin{align}
Y_m(I):=\|u_m\|_{L^5_t L^{10}_x(I)} &\le C_T \Big( \|(u_m(t_0), u_{m,t}(t_0))\|_{H_0^1 \times L^2} \notag \\
&\quad + \|S_m(u_m^5)\|_{L^1_t L^2_x(I)} + \|E_m(t) S_m(u_{m,t})\|_{L^1_t L^2_x(I)} \Big). \label{eq:strich-approx}
\end{align}

Crucially, since the smooth multiplier $S_m$ is defined via the spectral decomposition of the Dirichlet Laplacian, it is bounded on $L^2(\Omega)$ with operator norm bounded by $1$. This allows us to rigorously absorb the multiplier and match the nonlinear Strichartz scaling exactly:
\begin{equation}\label{eq:uniform-nonlinear}
\|S_m(u_m^5)\|_{L^1_t L^2_x(I)} \le \|u_m^5\|_{L^1_t L^2_x(I)} = \|u_m\|_{L^5_t L^{10}_x(I)}^5 = Y_m(I)^5.
\end{equation}

Testing (\ref{eq:approx}) with $u_{m,t}$ yields
\begin{eqnarray}\label{eq:linear energy}
[E_m^l(t)]' =  -(S_m(u_m^5), u_{m,t})_\Omega - (S_m(E_m(t)u_{m,t}), u_{m,t})_\Omega.
\end{eqnarray}

We observe that from the definition of $S_m$, we deduce that
\begin{eqnarray*}
\int_{\Omega} S_m(u_m') u_m' dx = \sum_{k=1}^{\infty} \chi\left(\frac{\lambda_k}{m}\right) |\langle u_m', \varphi_k \rangle|^2 \ge 0.
\end{eqnarray*}
and since $E_m(t) \ge 0$, the term $-E_m(t) \langle S_m u_m', u_m' \rangle$ is negative. This means that it helps to decrease the energy or, in the worst case, does not hinder it. We can simply throw it out of the inequality to obtain:
\begin{eqnarray}\label{est1}
\frac{d}{dt} E_m^l(t) \le - \int_{\Omega} S_m(u_m^5) u_m' dx.
\end{eqnarray}

From (\ref{est1}) we infer
\begin{eqnarray}\label{est2}
\frac{d}{dt} E_m^l(t) \le \|u_m(t)\|_{L^{10}}^5 \cdot [E_m^l(t)]^{1/2}.
\end{eqnarray}

Integrating the inequality above, and using Gronwall's inequality, we have that the root of the energy is controlled by the integral of the norm $L^{10}$ (Strichartz):
\begin{eqnarray}\label{energyestimate}
\sup_{t \in I} E_m^l(t)^{1/2} \le E_m^l(0)^{1/2} + \int_0^{T_1} \|u_m(t)\|_{L^{10}}^5 dt
\end{eqnarray}
where the term $\int_0^{T_1} \|u_m(t)\|_{L^{10}}^5 dt$ is exactly the norm $\|u_m\|_{L^5_t L^{10}_x}^5$.

\subsection{Small data: global SS and uniform critical bounds}

\begin{theorem}[Small energy $\Rightarrow$ global SS and critical bounds]\label{thm:small-data}
There exists $\varepsilon_*=\varepsilon_*(\Omega)>0$ such that if $E_0\le \varepsilon_*$
then \eqref{eq:main-weak} admits a unique global Shatah--Struwe solution $u$ and,
for every $T>0$,
\begin{equation}\label{eq:small-critical}
\|u\|_{L^5(0,T;L^{10})}+\|u\|_{L^4(0,T;L^{12})}\le C\,E_0^{1/2},
\end{equation}
where $C$ is independent of $T$.
\end{theorem}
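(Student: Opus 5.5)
The plan is a continuity (bootstrap) argument on the approximations \eqref{eq:approx}, run on unit-length windows, followed by passage to the limit.

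\textbf{Step 1: local Strichartz bound.} Fix $m$ and an interval $I=[t_0,t_0+\tau]$ with $\tau\le 1$. Apply \eqref{eq:strich-approx} with the constant $C_1$ associated to length-one intervals. Bound the nonlinear term by \eqref{eq:uniform-nonlinear}. For the damping term use $\|S_m\|_{L^2\to L^2}\le 1$ and $E_m(t)\le CE_0$:
\[
\|E_mS_m u_{m,t}\|_{L^1_tL^2_x(I)}\le \tau\, C E_0\sup_I\|u_{m,t}\|_{2}\le C E_0^{3/2}.
\]
Together with $\|(u_m,u_{m,t})(t_0)\|_{H^1_0\times L^2}\le CE_0^{1/2}$, this gives the bootstrap inequality
\[
Y_m(I)\le A E_0^{1/2}+A\,Y_m(I)^5,
\]
with $A$ depending only on $\Omega$ (using $E_0\le 1$). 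Since $\tau\mapsto Y_m([t_0,t_0+\tau])$ is continuous and vanishes at $\tau=0$, the usual continuity argument gives $Y_m(I)\le 2AE_0^{1/2}$ whenever $E_0\le\varepsilon_*$ with $A(4A)^4\varepsilon_*^{2}<1/2$. The $L^4_tL^{12}_x$ norm is controlled by the same Strichartz estimate, since both pairs are admissible at $H^1$ regularity. The energy bound \eqref{energyestimate} keeps $E_m^l$ comparable to $E_0$, so the smallness hypothesis is reproduced at each $t_0$ and the argument can be iterated window by window.

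\textbf{Step 2: $T$-independence (main obstacle).} Iterating Step 1 over $[k,k+1]$ only yields $\|u_m\|_{L^5(0,T;L^{10})}\lesssim T^{1/5}E_0^{1/2}$. Getting a constant independent of $T$ requires decay summable in $k$. The approach I would try first is to obtain
\[
\|u_m\|_{L^5_tL^{10}_x([k,k+1])}\lesssim E_m(k)^{1/2},
\]
which follows by rerunning Step 1 with $E_m(k)$ in place of $E_0$, and to combine it with a decay rate for $E_m(k)$. The available mechanism is $E'=-E\|u_t\|^2$, which from \eqref{eq1.3} decays at best like $1/t$. Then $\sum_k E_m(k)^{5/2}\lesssim\sum_k k^{-5/2}$ converges, so
\[
\|u_m\|_{L^5(0,\infty;L^{10})}^5\lesssim \sum_k E_m(k)^{5/2}\lesssim E_0^{5/2}+\sum_k k^{-5/2}.
\]
The difficulty is that this gives a bound that is finite and uniform in $T$ but not linear in $E_0^{1/2}$ without a quantitative decay estimate $E_m(t)\lesssim E_0/(1+E_0^2t)$ (Nakao-type, as in Section 4). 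Making that estimate uniform in $m$ at the approximate level is where I expect trouble. If it fails, I would settle for a constant depending on $T$, which still suffices for global existence.

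\textbf{Step 3: limit and uniqueness.} Pass to the limit as in Section 2, using Lemma~\ref{lem:helly} for $E_m$ and weak-$*$ compactness in the Strichartz spaces, together with $S_m\to I$ strongly in $L^p$; lower semicontinuity preserves \eqref{eq:small-critical}. For uniqueness, take two SS solutions with small Strichartz norm on short intervals and subtract the equations. Estimate $u^5-v^5$ in $L^1L^2$ by $(\|u\|^4_{L^5L^{10}}+\|v\|^4_{L^5L^{10}})\|u-v\|_{L^5L^{10}}$. Handle $E_u u_t-E_vv_t$ via $|E_u-E_v|\lesssim\|(u-v,u_t-v_t)\|_{H^1\times L^2}$ (using $L^6$ bounds), then close with Gronwall over consecutive windows. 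Continuity in $C([0,T];H^1_0\times L^2)$ follows from the Duhamel formula once the forcing lies in $L^1L^2$.
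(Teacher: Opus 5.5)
Your Step 1 is the paper's proof: the slab bootstrap $Y_m(I)\le AE_0^{1/2}+AY_m(I)^5$, using $\|S_m\|_{\mathcal L(L^2)}\le 1$ and a $\lesssim\tau E_0^{3/2}$ bound for the damping, closed by continuity and iterated over windows. The gap is Step 2, and you have identified it correctly. Iterating over unit windows only gives
\[
\|u\|_{L^5(0,T;L^{10})}+\|u\|_{L^4(0,T;L^{12})}\lesssim (1+T)^{1/4}E_0^{1/2},
\]
so your fallback to a $T$-dependent constant does not prove the statement. The paper's proof does no more than this. Its ``iterating over slabs covering $[0,T]$'' produces a $K(E_0)$ that grows like $(T/\tau)^{1/5}$, so the $T$-independence in \eqref{eq:small-critical} is asserted there, not proved.

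Your proposed repair cannot succeed, because the $T$-uniform bound with factor $E_0^{1/2}$ and $C=C(\Omega)$ is false.

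\begin{itemize}
\item \eqref{eq1.3} is a \emph{lower} bound, $E(t)\ge E_0/(1+2E_0t)$. It holds here too, since $\|u_t\|_2^2\le 2E$, but it gives no decay.
\item The sum $\sum_k k^{-5/2}$ you end up with is $O(1)$, not small in $E_0$.
\item Even granting the sharp rate $E(t)\lesssim E_0/(1+E_0t)$ (which matches \eqref{eq1.3}), your window bound gives $\sum_k E(k)^{5/2}\lesssim E_0^{3/2}$, i.e.\ a $T$-uniform bound of order $E_0^{3/10}$. Your rate $E_0/(1+E_0^2t)$ would give only $E_0^{1/10}$. Neither reaches $E_0^{1/2}$.
\end{itemize}

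This loss is genuine. Take data $(a\varphi_1,0)$, so $E_0\sim a^2$, and set $\alpha(t)=\langle u(t),\varphi_1\rangle$. Then $\alpha''+\lambda_1^2\alpha=-E(t)\alpha'-\langle u^5,\varphi_1\rangle$. On $[0,c/E_0]$ the right-hand side has $L^1_t$ norm $\lesssim ca+ca^3$; this uses $E\le E_0$, $|\alpha'|\le\|u_t\|_2$, and your own window bound $\|u^5\|_{L^1L^2([k,k+1])}\lesssim E_0^{5/2}$. Hence $\alpha(t)=a\cos(\lambda_1 t)+O(ca)$ there. Since $\|u\|_{L^{10}}\gtrsim\|u\|_{L^2}\ge|\alpha|$ on a bounded domain,
\[
\|u\|_{L^5(0,c/E_0;L^{10})}^5\gtrsim a^5\cdot\frac{c}{E_0}\sim E_0^{3/2}.
\]
So for $T=c/E_0$ the ratio $\|u\|_{L^5(0,T;L^{10})}/E_0^{1/2}\gtrsim E_0^{-1/5}$ is unbounded as $E_0\to0$. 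The damping coefficient is at most $E_0$, so the solution is essentially undamped for times of order $1/E_0$, and a bounded domain has no dispersion to compensate. What the slab argument actually proves is the $T$-dependent bound above, which suffices for global existence. A $T$-uniform bound of order $E_0^{3/10}$ would follow once an $m$-uniform Nakao decay estimate is available.

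Two further issues are inherited from the paper:
\begin{itemize}
\item The monotonicity $E_m(t)\le E_m(0)$, which you use to reproduce smallness at each window, is not available for \eqref{eq:approx}: there $E_m'=\langle (I-S_m)u_m^5,u_{m,t}\rangle-E_m\langle S_mu_{m,t},u_{m,t}\rangle$ has no sign.
\item In Step 3, Lemma~\ref{lem:helly} only yields $E_m\to\bar E$ for some $BV$ function. Identifying $\bar E$ with the energy of $u$ requires strong convergence in $H_0^1\times L^2$, or an energy identity for the limit, and neither you nor the paper supplies it.
\end{itemize}
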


\begin{remark}[Heuristics of the uniform critical bound]
Before proceeding with the rigorous proof, let us briefly outline the mechanism that guarantees the uniform finiteness of the Strichartz norm, i.e., $\sup_m \|u_m\|_{L^5_t L^{10}_x} < \infty$.

By exploiting the critical scaling of the quintic nonlinearity, we place the source term directly into the energy dual space $L^1_t L^2_x$. In this space, the spectral multiplier $S_m$ is trivially bounded by $1$ via Parseval's identity, allowing us to completely bypass any $L^p$ multiplier obstructions (such as Fefferman's theorem). This ensures that the constant $C$ in the nonlinear Strichartz estimate \eqref{eq:uniform-nonlinear} is strictly independent of the approximation parameter $m$. Consequently, from  (\ref{eq:strich-approx}), (\ref{eq:uniform-nonlinear}) and (\ref{energyestimate}), the Strichartz norm $Y_m(t)$ associated with the approximate solution $u_m$ satisfies a universal algebraic inequality of the form:
\begin{equation}\label{bootstrapestimate}
Y_m(t) \le A_0 + C Y_m(t)^5,
\end{equation}
where $A_0$ depends solely on the initial energy and the length of the time interval, but not on $m$.

Algebraically, the function $f(y) = y - C y^5$ possesses a strict local maximum for $y > 0$. If $A_0$ is small enough to stay below this peak (which we ensure either by taking small initial data or by partitioning the time interval), the inequality $f(Y_m) \le A_0$ creates a topological "trap". Since the mapping $t \mapsto Y_m(t)$ is continuous and vanishes at the initial time, $Y_m(t)$ is forced to remain in the bounded region near the origin and can never "jump" across the maximum. Crucially, because this polynomial trap is identical for all $m \in \mathbb{N}$, the maximum allowable value for $Y_m$ acts as a uniform ceiling for the entire sequence, unconditionally preventing energy concentration as $m \to \infty$.
\end{remark}

\begin{proof}[Proof (bootstrap on fixed slabs)]
Fix a slab length $\tau>0$ (to be chosen) and consider $I=[t_0,t_0+\tau]$.
Combining \eqref{eq:strich-approx}, \eqref{eq:uniform-nonlinear}, and  (\ref{energyestimate}), we have for the approximate solution:
\begin{equation}\label{eq:boot-small-2}
Y_m(I)\le C\Big(E_0^{1/2} + Y_m(I)^5 + \,E_0^{3/2}\Big),
\end{equation}
where $C$ is independent of $m$.

Choose $\tau>0$ fixed and then choose $\varepsilon_*$ so small that
$C(E_0^{1/2}+ E_0^{3/2})\le \delta$, where $\delta$ is the smallness threshold
in the algebraic bootstrap lemma. Then \eqref{eq:boot-small-2} implies $Y_m(I)\le 2C(E_0^{1/2}+\tau E_0^{3/2})\lesssim E_0^{1/2}$ uniformly in $m$ on every slab of length $\tau$.
Iterating over slabs covering $[0,T]$, we obtain a uniform bound $\sup_m \|u_m\|_{L^5_t L^{10}_x(0,T)} \le K(E_0)$. This uniform bound prevents high-frequency concentration, allowing us to pass to the limit $m \to \infty$ to obtain the unique SS solution satisfying \eqref{eq:small-critical}.
\end{proof}

\subsection{Large data: global SS and uniqueness}

\begin{theorem}[Arbitrary data $\Rightarrow$ global SS solution and continuous dependence]\label{thm:large-data}
For any initial data $(u_0, u_1) \in H_0^1(\Omega) \times L^2(\Omega)$ of arbitrary size, the problem \eqref{eq:main-weak} admits a unique global Shatah--Struwe solution on any interval $[0,T]$. Furthermore, the flow map $(u_0, u_1) \mapsto (u, u_t)$ is continuous from $H_0^1(\Omega) \times L^2(\Omega)$ into $C([0,T]; H_0^1(\Omega) \times L^2(\Omega)) \cap L^5(0,T; L^{10}(\Omega))$.
\end{theorem}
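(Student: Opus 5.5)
The plan is to reduce the large-data case to the small-data mechanism of Theorem~\ref{thm:small-data} by working on short slabs, and to use the monotonicity of the energy to keep the number of slabs finite. The damping coefficient is bounded by $E(t)\le E_0$ (Lemma~\ref{lem:energy-gal} and its analogue for \eqref{eq:approx}). Hence the term $E_m(t)S_m(u_{m,t})$ in \eqref{eq:strich-approx} is controlled in $L^1_tL^2_x(I)$ by $\tau E_0^{3/2}$, which is small on short slabs regardless of the size of the data.

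The nonlinear term is harder: it is not small merely because $E_0$ is small, so a pure-energy argument would not close. Instead I would use the fact that the free linear evolution has small Strichartz norm on short intervals. Concretely:
\begin{itemize}
\item[(i)] Since $\|e^{it\sqrt{-\Delta}}(u_0,u_1)\|_{L^5L^{10}(0,T)}<\infty$, absolute continuity of the integral gives, for fixed data, a $\tau_0>0$ such that the linear part has $L^5L^{10}$ norm at most $\delta$ on every slab of length $\tau_0$.
\item[(ii)] Write the Duhamel formula from $t_0$, replacing the term $\|(u_m(t_0),u_{m,t}(t_0))\|$ in \eqref{eq:strich-approx} by the linear-propagator norm. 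This gives the trap inequality \eqref{bootstrapestimate} with $A_0=\delta+C\tau E_0^{3/2}$.
\item[(iii)] Continuity of $\tau\mapsto Y_m([t_0,t_0+\tau])$ then yields $Y_m\le 2A_0$ uniformly in $m$.
\end{itemize}

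\textbf{Main obstacle: uniformity of the slab length.} The linear flow must be evaluated from the data at time $t_0$, which is $(u_m(t_0),u_{m,t}(t_0))$ and depends on $m$ and $t_0$. Uniform absolute continuity therefore requires compactness of $\{(u_m(t_0),u_{m,t}(t_0))\}$ in $H_0^1\times L^2$, which weak bounds alone do not provide. My approach would be to first pass to the limit on the initial slab. There, $S_m u_0\to u_0$ strongly, so the linear norms converge and $\tau_0$ is uniform. On that slab, the uniform $Y_m$ bound together with the energy identity upgrades weak to strong convergence in $C(I;H_0^1\times L^2)$, using the difference estimate below. The limit data at $t_0+\tau_0$ is then a fixed element, and the argument repeats. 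The key point is that the number of slabs needed to reach $T$ does not blow up. Here I would invoke the nonconcentration argument: assuming a maximal time $T^*<\infty$, energy monotonicity and the Shatah--Struwe/Grillakis Morawetz-type flux identity rule out concentration at $T^*$ (the damping term only helps). This is the genuinely hard step, and I would lean on the cited boundary versions in \cite{BurqLebeauPlanchon}.

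For \textbf{uniqueness and continuous dependence}, take two SS solutions $u,v$ with data close in $H_0^1\times L^2$. Apply \eqref{eq:strich-approx} without $S_m$ to $w=u-v$ on each slab, using
\[
|u^5-v^5|\lesssim |w|(|u|^4+|v|^4)
\]
and Hölder, which gives $\|u^5-v^5\|_{L^1L^2}\lesssim \|w\|_{L^5L^{10}}(\|u\|^4_{L^5L^{10}}+\|v\|^4_{L^5L^{10}})$. For the damping, write
\[
E_uu_t-E_vv_t=E_uw_t+(E_u-E_v)v_t,
\]
and use $|E_u-E_v|\lesssim \|(w,w_t)\|_{H^1\times L^2}+\|w\|_{L^6}$, with the constant depending on $E_0$. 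On slabs where both $L^5L^{10}$ norms are small, these terms are absorbed. A finite iteration over $[0,T]$ then gives
\[
\|(w,w_t)\|_{C(H_0^1\times L^2)}+\|w\|_{L^5L^{10}}\le C(T,E_0)\,\|(w(0),w_t(0))\|,
\]
which proves both uniqueness and continuity of the flow map.
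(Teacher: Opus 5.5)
Your proposal follows essentially the same route as the paper's proof. You make the free evolution of the regularized data uniformly small on a first slab (you get this from $S_m(u_0,u_1)\to(u_0,u_1)$ in $H_0^1\times L^2$, the paper from the commutation $W(t)S_m=S_mW(t)$ plus the uniform $L^{10}$ bound on $S_m$). Like the paper, you then pass to the limit and restart a new approximation from the exact limit data, exclude a finite accumulation time $T^*$ via the Grillakis/Shatah--Struwe nonconcentration mechanism in the boundary setting of Burq--Lebeau--Planchon, and use a Strichartz difference estimate for uniqueness and continuous dependence. Two of your additions are useful refinements of points the paper leaves implicit: the upgrade to strong convergence on each slab, which identifies the limit of the coefficients $E_m$ with the energy of $u$, and the explicit splitting $E_uu_t-E_vv_t=E_uw_t+(E_u-E_v)v_t$.
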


\begin{proof}
\textbf{Step 1: Uniform dispersive estimates on the initial lifespan.}
For large initial energy, we cannot guarantee that the uniform bootstrap threshold $\delta$ holds globally. However, for the initial time $t=0$, the approximate initial data is given by $(S_m u_0, S_m u_1)$. Let $W(t)$ denote the linear wave propagator. The linear evolution of the Galerkin approximation is $W(t)(S_m u_0, S_m u_1)$. Crucially, the spectral multiplier $S_m$ commutes with the linear wave group, allowing us to write:
\begin{eqnarray}\label{crucialidentity}
W(t)(S_m u_0, S_m u_1) = S_m \big( W(t)(u_0, u_1) \big).
\end{eqnarray}

Let $v_{lin}(t) = W(t)(u_0, u_1)$ be the linear evolution of the exact, original initial data. Since $(u_0, u_1) \in H_0^1(\Omega) \times L^2(\Omega)$, classical Strichartz estimates guarantee that $v_{lin} \in L^5(\mathbb{R}; L^{10}(\Omega))$. Notice that $v_{lin}$ is a fixed, single function that does not depend on the approximation index $m$. By the absolute continuity of the Lebesgue integral, a fixed function in $L^5$ can have its integral made arbitrarily small if the domain of integration is small enough. Thus, there exists a time $T_1 > 0$ (depending strictly on the profile of $v_{lin}$) such that:
$$\|v_{lin}\|_{L^5(0, T_1; L^{10})} < \frac{\delta}{2 C_{10}},$$
where $C_{10}$ is the uniform operator norm bound of $S_m$ on $L^{10}(\Omega)$ (provided by the Mikhlin--H\"ormander theorem). Applying this uniform bound, we obtain:
$$\|W(t)(S_m u_0, S_m u_1)\|_{L^5_t L^{10}_x([0, T_1])} = \|S_m (v_{lin})\|_{L^5_t L^{10}_x([0, T_1])} \le C_{10} \|v_{lin}\|_{L^5_t L^{10}_x([0, T_1])} < \frac{\delta}{2} < \delta.$$

This establishes the required uniform smallness $\delta$ for the bootstrap on $[0, T_1]$ without requiring the initial energy $E_0$ to be small. Most importantly, $T_1$ is completely independent of $m$. Applying the algebraic bootstrap \eqref{eq:boot-small-2} on $[0,T_1]$, we obtain a uniform bound $\sup_m \|u_m\|_{L^5_t L^{10}_x(0,T_1)} \le C$. Passing to the limit $m \to \infty$, we obtain an exact local Shatah--Struwe solution $u$ on $[0,T_1]$.

\textbf{Step 2: The failure of uniform partitioning at the approximate level.}
At this point, a severe warning is necessary. One cannot simply divide a large interval $[0,T]$ into $N$ small subintervals and iterate the uniform bound for the \textit{same} sequence $\{u_m\}$. At $t=T_1$, the sequence of approximate states $(u_m(T_1), u_{m,t}(T_1))$ is only uniformly bounded in energy. Due to the critical nature of the exponent, energy-bounded sequences can develop high-frequency oscillations (concentration). Consequently, the time step $\tau_m$ required to make the new linear evolution smaller than $\delta$ depends heavily on $m$ and may shrink to zero ($\tau_m \to 0$ as $m \to \infty$). Thus, the bootstrap fails for $u_m$ past $T_1$.

\textbf{Step 3: Continuation via exact solutions and uniqueness.}
To overcome this critical obstruction, we completely discard the original sequence $u_m$ for $t > T_1$. Instead, we evaluate our newly constructed exact solution at $t=T_1$. Since $(u(T_1), u_t(T_1)) \in H_0^1(\Omega) \times L^2(\Omega)$, we use this exact state as new initial data. We construct a \textit{new} spectral approximation sequence $\{v_m\}$ starting at $t=T_1$. Repeating the argument from Step 1, we find a new interval $[T_1, T_2]$ uniformly for $v_m$, and pass to the limit to obtain an exact SS solution $v$ on $[T_1, T_2]$.

Because Shatah--Struwe solutions are unique in the $L^5_t L^{10}_x$ class (a standard consequence of the Strichartz estimates applied to the difference of two solutions), $u(T_1) = v(T_1)$ allows us to glue these solutions perfectly, producing a unique SS solution on $[0,T_2]$.

\medskip
\noindent
\textbf{Step 4: Global extension and exclusion of infinitesimal lifespan (Zeno-type obstruction).}

A potential obstruction to the global extension procedure is the
formation of a so-called \emph{Zeno-type accumulation of time intervals},
namely, the possibility that the successive local existence intervals
$[T_j, T_{j+1}]$ shrink to zero in such a way that
\[
\sum_{j=0}^\infty (T_{j+1}-T_j) = T^* < T,
\]
preventing the solution from reaching an arbitrary final time $T>0$.

We show that this scenario cannot occur.

\medskip
\noindent
\textbf{Step 4.1: Dependence of the local lifespan on the critical norm.}

In the Shatah--Struwe framework, the size of each local time interval
$\Delta T$ is determined by the smallness of the Strichartz norm of the
associated linear evolution. More precisely, given data at time $t_0$,
if the free wave satisfies
\[
\|S(t-t_0)(u(t_0),u_t(t_0))\|_{L^5_tL^{10}_x([t_0,t_0+\Delta T])}
< \varepsilon,
\]
for some universal $\varepsilon>0$, then the nonlinear problem admits
a solution on $[t_0,t_0+\Delta T]$.

Thus, the existence of a uniform lower bound for $\Delta T$
is equivalent to controlling the critical Strichartz norm.

\medskip
\noindent
\textbf{Step 4.2: Mechanism leading to shrinking intervals.}

Suppose, by contradiction, that the sequence of time steps satisfies
\[
T_{j+1}-T_j \to 0
\quad\text{as } j\to\infty,
\]
and that the corresponding partial sums converge to some
$T^*<T$.

By the local theory, this degeneration can only occur if the critical norm
blows up near $T^*$, that is,
\[
\|u\|_{L^5_tL^{10}_x([0,T^*))} = \infty.
\]
In other words, the shrinking of the intervals is necessarily driven by
a concentration mechanism in the critical norm.

\medskip
\textbf{Step 4.3: Exclusion of concentration via subcritical damping}We now establish that energy concentration is precluded by the dissipative and dispersive structure of the equation.The total energy $E(t)$ is non-increasing and uniformly bounded. Moreover, the damping term $E(t)u_t$ is strictly subcritical with respect to the $L^5_t L^{10}_x$ scaling. Specifically, for any small interval $[T^*-\epsilon, T^*]$ near a potential singularity, we have:

\begin{equation}\label{eq:damping_est}
\|E(t)u_{m,t}\|_{L^1(T^*-\epsilon, T^*; L^2(\Omega))} \le \epsilon \|E\|_{L^\infty(0,T)} \|u_{m,t}\|_{L^\infty(0,T; L^2(\Omega))} \le C \epsilon E_0.
\end{equation}

By the stability theory for energy-critical wave equations (cf. \cite[Theorem 2.10]{KenigMerle}), any solution of the perturbed equation \eqref{eq:approx} behaves locally like a solution of the unperturbed critical wave equation, provided the perturbation is small in $L^1_t L^2_x$. Since \eqref{eq:damping_est} can be made arbitrarily small by choosing $\epsilon$ small enough, the damping acts as a lower-order term that does not interfere with the concentration-compactness mechanism of the underlying dispersive flow. Following the concentration-compactness principle (cf. Bahouri--G'erard \cite{BahouriGerard}, Grillakis \cite{Grillakis}), the blow-up of the critical norm $\|u\|_{L^5_t L^{10}_x}$ would necessarily imply the concentration of a nontrivial amount of energy at a spacetime point $(x_0, T^*)$. However, in the defocusing case, the interaction of the non-increasing energy $E(t) \le E(0)$ with the local energy decay lemma of Grillakis \cite{Grillakis} ensures that no such concentration can be sustained.Because the damping term $E(t)u_t$ belongs to the "small perturbation" class $L^1_t L^2_x$, it cannot arrest the dispersive radiation of energy away from $(x_0, T^*)$. Consequently, the defect measures associated with the sequence $u_m$ must vanish, implying:
\begin{equation}
|u|_{L^5_t L^{10}_x([0, T^*])} < \infty,
\end{equation}
which contradicts the maximality of $T^*$. This proves that the solution remains global in the Shatah--Struwe sense.

\medskip
The dissipative term $E(t)u_t$ acts as a subcritical perturbation in the $L^1_t L^2_x$ Strichartz topology. In the concentration-compactness framework, such perturbations lack the scaling required to sustain a concentration profile; hence, the dispersive radiation of energy (Grillakis' Lemma) dominates, precluding the formation of singularities at $T^*$.

\medskip
\noindent
\textbf{Step 4.4: Uniform lower bound for the lifespan and exclusion of Zeno-type accumulation.}

The absence of concentration established in Step 4.3 has a fundamental consequence: the critical $L^5_t L^{10}_x$ norm remains not only finite but also \textit{uniformly continuous} as $t \uparrow T^*$. This prevents the so-called Zeno-type obstruction, where the local existence increments $\Delta T_j = T_{j+1} - T_j$ could potentially shrink to zero in a convergent series.

Indeed, since no energy concentrates at $T^*$, the local existence theory (relying on the smallness of the free evolution in the Strichartz space) provides a \textbf{uniform lower bound} for the lifespan. Specifically, there exists a constant $\delta = \delta(E_0, \Omega) > 0$, depending only on the initial energy level and the domain geometry, such that from any time $t_0 < T^*$, the solution can be uniquely extended over an interval $[t_0, t_0 + \delta]$.

If the sequence of time steps were to satisfy $T_{j+1} - T_j \to 0$, it would necessarily imply that the required smallness condition for the local theory is being violated by a concentration of the Strichartz norm, which we have already precluded. Thus, one can choose a fixed $\delta > 0$ to extend the solution beyond the purported singularity $T^*$, directly contradicting its maximality.

\medskip
\noindent
\textbf{Conclusion of the Global Proof.}

Therefore, the Zeno-type accumulation of time intervals is excluded. The local solutions can be consistently patched together to reach any arbitrary time $T > 0$. This establishes that the Shatah--Struwe solution $u$ is global in time, satisfying $T^* = \infty$ and preserving the energy-critical dispersive estimates for all $t \in [0, \infty)$.

\textbf{Step 5: Continuous dependence.}
Continuous dependence on the initial data follows from standard arguments. If $u$ and $w$ are two SS solutions with initial data $(u_0, u_1)$ and $(w_0, w_1)$, writing the equation for the difference $z = u - w$ and applying the Strichartz estimates along with the locally Lipschitz character of $f(u) = u^5$ in the $L^5_t L^{10}_x$ topology, we obtain $\|z\|_{L^5_t L^{10}_x \cap C(H^1 \times L^2)} \le C_T \|(u_0 - w_0, u_1 - w_1)\|_{H^1 \times L^2}$, which yields the desired continuity.
\end{proof}

\begin{remark}[Refinement of the methodology in \cite{Cavalcanti2024}]\label{rem:correction-cavalcanti}
We take this opportunity to refine the scope of the methodology presented in \cite{Cavalcanti2024}. In that work, the stabilization results were established using a standard Galerkin scheme $P_m$, which is naturally suited for regimes where the solution remains small or controlled by the dissipation. As we have demonstrated in the present paper (see Step 2), the extension of such approximate schemes to the \textit{arbitrarily large data} regime without smallness constraints requires a more delicate treatment to avoid potential compactness loss (energy concentration) at the approximate level. Consequently, the current approach—relying on the patching of exact solutions and smoothly truncated spectral approximations—should be viewed as the rigorous framework for handling the full energy-critical dynamics for large initial data, complementing and extending the results of \cite{Cavalcanti2024}.

\end{remark}

\subsection{Higher-order uniform energy bounds}

To guarantee the existence of strong Shatah--Struwe solutions in the energy space $(H^2(\Omega)\cap H_0^1(\Omega)) \times H_0^1(\Omega)$, we must establish uniform \textit{a priori} estimates for the higher-order energy.

For the approximate solution $u_m$ given by \eqref{eq:approx}, we define the regular energy of order one as:
\begin{equation}\label{eq:reg_energy_def}
E_{m,1}(t) := \frac{1}{2} \left( \|\nabla u_{m,t}(t)\|_{L^2(\Omega)}^2 + \|\Delta u_m(t)\|_{L^2(\Omega)}^2 \right).
\end{equation}

We multiply the approximate equation 
\begin{eqnarray*}
&&u_{m, tt} - \Delta u_m + E_m(t) u_{m,t} + S_m(u_m^5)=0 ~\hbox{ in }~\Omega \times \mathbb{R}_+,\\
&& u_m(0)= S_m(u_0),~\quad u_{m,t}(0)= S_m(u_1),
\end{eqnarray*}
by $-\Delta u_{m,t}$ and integrate over $\Omega$. Using Green's identity and the fact that the smooth spectral multiplier $S_m$ commutes with the Dirichlet Laplacian, we obtain:
\begin{align}\label{eq:reg_energy_id}
\frac{1}{2} \frac{d}{dt} \left( \|\nabla u_{m,t}\|_{L^2}^2 + \|\Delta u_m\|_{L^2}^2 \right) &+ E_m(t) \|\nabla u_{m,t}\|_{L^2}^2 \notag \\
&= - \int_\Omega S_m(u_m^5) (-\Delta u_{m,t}) \, dx \notag \\
&= - 5 \int_\Omega S_m(u_m^4 \nabla u_m) \cdot \nabla u_{m,t} \, dx.
\end{align}

Since the damping coefficient is non-negative ($E_m(t) \ge 0$), we drop the dissipation term on the left-hand side. Applying the Cauchy-Schwarz inequality and recalling that $\|S_m\|_{\mathcal{L}(L^2)} \le 1$, we have:
\begin{equation}\label{eq:reg_energy_bound1}
\frac{d}{dt} E_{m,1}(t) \le 5 \||u_m|^4 \nabla u_m\|_{L^2(\Omega)} \|\nabla u_{m,t}\|_{L^2(\Omega)}.
\end{equation}

Applying the generalized H\"older inequality with exponents $(3, 6)$ and the continuous Sobolev embedding $H^2(\Omega) \hookrightarrow W^{1,6}(\Omega)$ in $\mathbb{R}^3$, we estimate the nonlinearity:
\begin{align}\label{eq:nonlinear_reg_est}
\||u_m|^4 \nabla u_m\|_{L^2(\Omega)} &\le \|u_m^4\|_{L^3(\Omega)} \|\nabla u_m\|_{L^6(\Omega)} \notag \\
&\le C \|u_m\|_{L^{12}(\Omega)}^4 \|\Delta u_m\|_{L^2(\Omega)}.
\end{align}

Inserting \eqref{eq:nonlinear_reg_est} into \eqref{eq:reg_energy_bound1} and using Young's inequality ($ab \le \frac{a^2+b^2}{2}$), we deduce:
\begin{align}\label{eq:reg_energy_diff}
\frac{d}{dt} E_{m,1}(t) &\le 5C \|u_m(t)\|_{L^{12}(\Omega)}^4 \left( \frac{\|\Delta u_m(t)\|_{L^2(\Omega)}^2 + \|\nabla u_{m,t}(t)\|_{L^2(\Omega)}^2}{2} \right) \notag \\
&= 5C \|u_m(t)\|_{L^{12}(\Omega)}^4 E_{m,1}(t).
\end{align}

At this critical stage, we must be careful. As extensively discussed in Theorem \ref{thm:large-data}, for large initial data, the global uniform Strichartz bound over an arbitrary interval $[0,T]$ may fail at the approximation level due to energy concentration. Thus, we cannot naively integrate this inequality up to an arbitrary time $T$ using a single sequence $u_m$.

Instead, we proceed locally, mirroring the continuation argument of Theorem \ref{thm:large-data}. We integrate \eqref{eq:reg_energy_diff} exclusively on the first uniform interval $[0, T_1]$, where the bootstrap mechanism ensures that the uniform bound $\sup_m \|u_m\|_{L^4_t L^{12}_x(0,T_1)} < \infty$ holds unconditionally. Applying Gr\"onwall's lemma yields:
\begin{equation}\label{eq:gronwall_reg_local}
E_{m,1}(t) \le E_{m,1}(0) \exp\left( 5C \int_0^t \|u_m(s)\|_{L^{12}(\Omega)}^4 \, ds \right) \le \tilde{K}, \quad \forall t \in [0, T_1],
\end{equation}
uniformly in $m$. Passing to the limit $m \to \infty$, we conclude that the exact local solution $u$ inherits this strong regularity, satisfying $(u(T_1), u_t(T_1)) \in H^2(\Omega) \times H_0^1(\Omega)$.

To extend this regularity to the next interval $[T_1, T_2]$, we use $(u(T_1), u_t(T_1))$ as the new strong initial data to construct the subsequent spectral sequence $v_m$. Since the global exact solution $u$ satisfies $\|u\|_{L^4_t L^{12}_x(0,T)} < \infty$ unconditionally (as no energy concentration occurs at the limit level, the defect measures vanish), this step-by-step local Gr\"onwall integration can be iterated to cover any finite interval $[0,T]$. This rigorously justifies the global existence of strong solutions without derivative loss.\medskip

as no energy concentration occurs at the limit level, the defect measures vanish.

\begin{remark}
[{\bf On the choice of the approximation scheme}.] ~It is important to clarify the transition between the approximation schemes used in this manuscript. For the construction of energy weak solutions (previous Sections), we employ a Galerkin-type scheme where the damping term is regularized by the spectral multiplier, i.e., $S_m(E_m(t)u_{m,t})$. This is necessary because, at the weak level, the damping must be treated as a source term in $L^1_t L^2_x$ to establish the first set of uniform Strichartz estimates via the non-homogeneous theory. However, once the existence of a unique global Shatah--Struwe solution $u$ is established, the stability theory for energy-critical equations (cf. \cite{KeelTao, KenigMerle}) allows us to approximate $u$ by a sequence of exact smooth solutions $\{w^k\}$ of the original, unregularized problem. By shifting to exact solutions in the higher-order regularity analysis (Section 3), we bypass the technical obstructions caused by the non-commutativity of the spectral projector with nonlinear weights in $H^2(\Omega)$, while fully exploiting the dissipative nature of the term $E(t)w_t$ which, at this stage, is already controlled by the dispersive norms of the limit solution.
\end{remark}

\subsection{Global approximation by strong solutions}

To conclude the proof of Shatah--Struwe regularity, we establish that any global weak solution can be globally approximated by a sequence of strong solutions. This step is fundamental to justify that the energy-critical dispersive structure is preserved under the limit of smooth data.

\begin{theorem}[Global strong approximation]\label{thm:strong-approx}
Let $(u_0, u_1) \in H_0^1(\Omega) \times L^2(\Omega)$ and let $u$ be the unique global Shatah--Struwe solution on $[0,T]$. Then, there exists a sequence of global strong solutions $\{w^k\}$ such that
$$w^k \longrightarrow u \quad \text{in} \quad C([0,T]; H_0^1(\Omega) \times L^2(\Omega)) \cap L^5(0,T; L^{10}(\Omega)).$$
Furthermore, for each $k$, $w^k \in C([0,T]; (H^2(\Omega)\cap H_0^1(\Omega)) \times H_0^1(\Omega))$.
\end{theorem}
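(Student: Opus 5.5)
The plan is to combine density of regular data with the continuous dependence from Theorem~\ref{thm:large-data} and the propagation of regularity from the higher-order energy estimate \eqref{eq:gronwall_reg_local}. Since $H^2(\Omega)\cap H_0^1(\Omega)$ is dense in $H_0^1(\Omega)$ and $H_0^1(\Omega)$ is dense in $L^2(\Omega)$, we choose data $(w_0^k,w_1^k)\in (H^2\cap H_0^1)\times H_0^1$ with $(w_0^k,w_1^k)\to(u_0,u_1)$ in $H_0^1\times L^2$. A concrete choice is $w_0^k=S_k u_0$ and $w_1^k=S_k u_1$. These lie in every domain of a power of the Dirichlet Laplacian, and $\chi(\lambda_j/k)\to1$ pointwise together with $0\le\chi\le1$ gives convergence in $H_0^1\times L^2$ by dominated convergence on the spectral side. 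Let $w^k$ be the global Shatah--Struwe solution with these data, given by Theorem~\ref{thm:large-data}.

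\textbf{Step 1 (regularity of each $w^k$).} We show $w^k\in C([0,T];(H^2\cap H_0^1)\times H_0^1)$ by running the higher-order energy argument of the preceding subsection. On the first interval $[0,T_1^k]$, the spectral approximations of $w^k$ satisfy \eqref{eq:reg_energy_diff} with a uniform $L^4_tL^{12}_x$ bound, so Gr\"onwall gives a uniform bound on $E_{m,1}$. Passing to the limit gives $(w^k,w^k_t)\in L^\infty(0,T_1^k;H^2\times H_0^1)$.

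We then iterate along the patching of Step~3 of Theorem~\ref{thm:large-data}. The key point is that the exponent in the Gr\"onwall bound is $\int\|w^k\|_{L^{12}}^4$, and this is finite on $[0,T]$ because $w^k\in L^4(0,T;L^{12})$ globally. Hence there are finitely many slabs, each inflating the $H^2$ norm by a bounded factor. Time continuity in the strong norm follows from the linear theory in $D(-\Delta)\times H_0^1$, with $E(t)w^k_t$ and $(w^k)^5$ as forcing in $L^1_tH_0^1$. Here $\nabla (w^k)^5\in L^1L^2$ by \eqref{eq:nonlinear_reg_est}.

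\textbf{Step 2 (convergence).} Let $z^k=w^k-u$. It solves
\[
z^k_{tt}-\Delta z^k=-\big((w^k)^5-u^5\big)-\big(E_{w^k}(t)w^k_t-E_u(t)u_t\big).
\]
Split $[0,T]$ into finitely many slabs $I_j$ on which $\|u\|_{L^5L^{10}(I_j)}\le\eta$, which is possible since $u\in L^5L^{10}$. On each slab, the Strichartz estimate \eqref{eq:strich-approx}, together with
\[
|(w^k)^5-u^5|\lesssim |z^k|\,(|u|^4+|z^k|^4),
\]
gives
\[
\|z^k\|_{S(I_j)}\le C\|z^k(t_j)\|_{H^1\times L^2}+C\eta^4\|z^k\|_{S(I_j)}+C\|z^k\|_{S(I_j)}^5+\text{(damping difference)}.
\]
Here $S(I_j)=L^\infty(H^1\times L^2)\cap L^5L^{10}$ on $I_j$.

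For the damping difference we write
\[
E_{w^k}w^k_t-E_uu_t=(E_{w^k}-E_u)w^k_t+E_u z^k_t.
\]
We bound $|E_{w^k}(t)-E_u(t)|\lesssim\|z^k(t)\|_{H^1\times L^2}(1+\cdots)$, using $L^6$ control via Sobolev, and $\|E_uz^k_t\|_{L^1L^2}\le |I_j|E_0\|z^k\|_{S}$. The slabs are also taken short enough to absorb this term. A continuity/bootstrap argument in $k$ then gives $\|z^k\|_{S(I_j)}\lesssim\|z^k(t_j)\|$, provided the right-hand side is small. Iterating over the finitely many slabs yields $\|z^k\|_{S(0,T)}\le C_T\|(w_0^k-u_0,w_1^k-u_1)\|\to0$. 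This is essentially Step~5 of Theorem~\ref{thm:large-data}, which we may invoke directly.

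\textbf{Main obstacle.} The delicate point is Step~1, the global propagation of $H^2$ regularity for $w^k$. The Gr\"onwall argument of the preceding subsection was performed on the spectral approximations, only slab by slab. To close it we must make sure that the lengths of the slabs for $w^k$ do not degenerate and that the $H^2$ bound passes to the limit at each gluing time. I would handle this by performing the Gr\"onwall estimate directly on the exact solution $w^k$. The identity \eqref{eq:reg_energy_id} would be justified by a difference-quotient or mollification-in-time argument, using only $w^k\in L^4(0,T;L^{12})$. This yields
\[
E_1(t)\le E_1(0)\exp\Big(C\int_0^T\|w^k\|_{L^{12}}^4\Big)
\]
in a single step and avoids any dependence on slab lengths.
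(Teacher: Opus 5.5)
Your proposal is correct and follows essentially the paper's route: regular data converging in $H_0^1\times L^2$, Gr\"onwall propagation of the higher-order energy of the exact solutions $w^k$ through $\int_0^T\|w^k\|_{L^{12}}^4\,ds$, and convergence from the stability/continuous dependence of Theorem~\ref{thm:large-data}. The differences are minor. You use the finiteness of $\|w^k\|_{L^4(0,T;L^{12})}$ for each fixed $k$ instead of the uniform-in-$k$ bound \eqref{eq:stability_bound}, which the statement does not need. You also treat explicitly the nonlocal difference $(E_{w^k}-E_u)w^k_t$ and the justification of \eqref{eq:reg_energy_id} for $w^k$, both of which the paper passes over. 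For the latter, however, a time-regularization argument using only $w^k\in L^4(0,T;L^{12})$ cannot get started, since it presupposes $\Delta w^k\in L^2$ (or uniform control of the difference quotients near $t=0$); the clean route is local well-posedness in $(H^2\cap H_0^1)\times H_0^1$ (subcritical, since $H^2\hookrightarrow L^\infty$) with a blow-up alternative that your Gr\"onwall bound excludes, together with uniqueness in the Shatah--Struwe class to identify that strong solution with $w^k$.
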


\begin{proof}
Let $(v_0^k, v_1^k) \in (H^2(\Omega)\cap H_0^1(\Omega)) \times H_0^1(\Omega)$ be a sequence of smooth initial data such that $(v_0^k, v_1^k) \to (u_0, u_1)$ strongly in the energy space $H_0^1(\Omega) \times L^2(\Omega)$. For each $k \in \mathbb{N}$, let $w^k$ be the unique solution to \eqref{eq:main-weak} with initial data $(v_0^k, v_1^k)$.

\textbf{Step 1: Uniform Strichartz bounds via stability.}

A central issue is ensuring that the sequence $\{w^k\}$ satisfies uniform bounds in the Strichartz spaces over the entire interval $[0,T]$. Since the limit solution $u$ is a global Shatah--Struwe solution, its critical norm is finite: $\|u\|_{L^5(0,T; L^{10}(\Omega))} = M < \infty$.By the \textit{Stability Theorem} for energy-critical wave equations (cf. \cite{KenigMerle, KeelTao}), the flow map is locally Lipschitz continuous. Specifically, let $v^k_{lin}$ be the homogeneous wave with initial data $(v_0^k - u_0, v_1^k - u_1)$. Since the data converges strongly in the energy space, the Strichartz estimates for the {\bf homogeneous linear evolution} imply that $\|v^k_{lin}\|_{S(0,T)} \to 0$ as $k \to \infty$.This smallness of the homogeneous perturbation, combined with the finiteness of the critical norm of $u$, allows the stability theory to kick in. For $k$ sufficiently large, the strong convergence of initial data implies:
\begin{equation}\label{eq:stability_bound}
\|w^k\|_{L^5(0,T; L^{10}(\Omega))} + \|w^k\|_{L^4(0,T; L^{12}(\Omega))} \le 2 |u|_{S(0,T)} \le \mathcal{C}(M, E_0).
\end{equation}
The bound \eqref{eq:stability_bound} is \textbf{uniform in $k$}. This is the crucial "free" estimate that allows us to bypass the concentration issues of the Galerkin scheme.

\textbf{Step 2: Propagation of higher-order regularity.}
With the uniform bound $\|w^k\|_{L^4(0,T; L^{12}(\Omega))} \le \mathcal{C}$ in hand, we return to the higher-order energy $E_{k,1}(t)$ defined in \eqref{eq:reg_energy_def}. Applying the Gr\"onwall inequality derived in the previous section directly to the exact solutions $w^k$, we have:
\begin{equation}
E_{k,1}(t) \le E_{k,1}(0) \exp\left( 5C \int_0^T \|w^k(s)\|_{L^{12}(\Omega)}^4 \, ds \right) \le E_{k,1}(0) \exp(5C \cdot \mathcal{C}^4).
\end{equation}
Since $E_{k,1}(0) < \infty$ for each fixed $k$ (by the smoothness of the approximating data), and the exponential factor is uniformly bounded in $k$, it follows that each $w^k$ is a global strong solution.

\textbf{Step 3: Conclusion.}
The strong convergence $w^k \to u$ in $C(0,T; H_0^1 \times L^2) \cap L^5_t L^{10}_x$ follows immediately from the Stability Theorem and the uniqueness of Shatah--Struwe solutions. This completes the approximation process.
\end{proof}

\begin{remark}
The shift from Galerkin approximations $u_m$ to exact smooth solutions $w^k$ is not merely technical. While $u_m$ satisfies a perturbed equation (due to the projector $S_m$), $w^k$ satisfies the exact nonlinear wave equation. This allows us to invoke the full strength of the dispersive stability theory, which is typically unavailable for approximate Galerkin systems in the large-data critical regime.
\end{remark}

This completes the approximation process. As a direct consequence, if the original initial data $(u_0, u_1)$ belongs to $(H^2(\Omega) \cap H_0^1(\Omega)) \times H_0^1(\Omega)$, the limit solution $u$ is itself a strong solution, satisfying $u \in C([0,T]; H^2(\Omega)) \cap C^1([0,T]; H^1(\Omega))$, and the higher-order energy inequality \eqref{eq:gronwall_reg_local} holds for $u$ on the entire interval $[0,T]$.

\section{Asymptotic Behavior}

In this section, we investigate the asymptotic behavior of the global Shatah-Struwe solutions to problem \eqref{eq:main-weak}. As discussed in the introduction for the linear problem, the nonlocal nature of the Balakrishnan-Taylor damping suggests a slow algebraic decay. We will employ the well-known Nakao's method (see e.g., \cite{Nakao}) to show that the critical nonlinearity $u^5$ does not disrupt this optimal decay rate.

We recall the energy identity for strong solutions:
\begin{equation}\label{eq:energy-dissip}
E'(t) = -E(t) \|u_t(t)\|_{L^2(\Omega)}^2 \le 0.
\end{equation}
Defining the dissipation over a time interval $[t, t+1]$ as $D(t)^2 := E(t) - E(t+1)$, we have:
\begin{equation}\label{eq:dissip-def}
\int_t^{t+1} E(s) \|u_t(s)\|_{L^2}^2 \,ds = E(t) - E(t+1) = D(t)^2.
\end{equation}
Since $E(t)$ is nonincreasing, $E(t+1) \le E(s) \le E(t)$ for all $s \in [t, t+1]$. Thus,
\begin{equation}\label{eq:ut-bound}
\int_t^{t+1} \|u_t(s)\|_{L^2}^2 \,ds \le \frac{D(t)^2}{E(t+1)}.
\end{equation}

\begin{theorem}[Optimal Decay Rate]
Let $u$ be the global strong solution of problem \eqref{eq:main-weak} with initial energy $E_0 > 0$. Then, there exist positive constants $C_0$ and $t_0$ such that the energy satisfies the algebraic decay rate:
\begin{equation}\label{eq:optimal-decay}
E(t) \le \frac{C_0}{t} \quad \text{for all } t \ge t_0.
\end{equation}
\end{theorem}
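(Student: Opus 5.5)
We follow Nakao's difference-inequality scheme. The aim is an estimate of the form
\[
\sup_{s\in[t,t+1]}E(s)\le C\,\big(D(t)^2+\dots\big)^{\theta}
\]
that closes into $E(t+1)^{2}\le C\,(E(t)-E(t+1))$. By Nakao's lemma this yields $E(t)\le C_0/t$ for large $t$.

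\textbf{Step 1: time mean value.} From \eqref{eq:ut-bound} there are $t_1\in[t,t+1/4]$ and $t_2\in[t+3/4,t+1]$ with $\|u_t(t_i)\|_2^2\le 4D(t)^2/E(t+1)$. We work with the strong solutions of Theorem~\ref{thm:strong-approx}, so the multiplier computation is justified; the result passes to Shatah--Struwe solutions by density and continuous dependence.

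\textbf{Step 2: equipartition multiplier.} Multiply \eqref{eq:main-weak} by $u$ and integrate over $[t_1,t_2]\times\Omega$:
\[
\int_{t_1}^{t_2}\big(\|\nabla u\|_2^2+\|u\|_6^6\big)\,ds=\int_{t_1}^{t_2}\|u_t\|_2^2\,ds-\Big[(u_t,u)\Big]_{t_1}^{t_2}-\int_{t_1}^{t_2}E(s)(u_t,u)\,ds .
\]
The $u^5$ term has a good sign, which is why the critical defocusing nonlinearity causes no harm. It only adds to the left side, and no Strichartz norm is needed here.

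We bound each term using Poincar\'e, $\|u\|_2\le C\|\nabla u\|_2\le CE^{1/2}$. The boundary terms are $\le C\,\frac{D(t)}{E(t+1)^{1/2}}\,E(t)^{1/2}$. The damping term is $\le C\,E(t)^{3/2}\big(\int_t^{t+1}\|u_t\|_2^2\big)^{1/2}\le C\,E(t)^{3/2}D(t)/E(t+1)^{1/2}$. Adding $\int_{t_1}^{t_2}\|u_t\|^2_2$ to both sides, $\int_{t_1}^{t_2}E(s)\,ds$ is controlled. Since $E$ is nonincreasing, $E(t_2)\le 2\int_{t_1}^{t_2}E$, and therefore
\[
E(t+1)\le C\Big(\frac{D^2}{E(t+1)}+\frac{D\,E(t)^{1/2}}{E(t+1)^{1/2}}\Big),
\]
with $C$ depending on $E_0$ and $\Omega$.

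\textbf{Step 3: closing.} We aim for $E(t+1)^{2}\le C\,D(t)^2=C(E(t)-E(t+1))$, and this is the main obstacle: the ratio $E(t)/E(t+1)$ must be controlled. If $E(t)\le 2E(t+1)$, the inequality above gives $E(t+1)^{3/2}\le C(D^2E(t+1)^{-1/2}+D)$. Then either $E(t+1)^2\lesssim D^2$ directly, or $E(t+1)^{3/2}\lesssim D$, which gives $E(t+1)^3\lesssim D^2$. If instead $E(t)>2E(t+1)$, then $D^2\ge E(t)/2$, so trivially $E(t+1)^2\le E(t)^2\le 2E_0D^2$.

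I expect the weaker branch to be the genuine difficulty. The cubic form $E(t+1)^3\le C(E(t)-E(t+1))$ yields only $t^{-1/2}$, so to obtain the claimed $t^{-1}$ I would sharpen Step 2. The plan is to avoid Cauchy--Schwarz on the boundary terms by choosing $t_1,t_2$ and exploiting $\|u_t(t_i)\|_2^2\lesssim D^2/E$ more carefully. Alternatively, I would use the exact ODE structure $E'=-E\|u_t\|^2$ with the averaged bound $\int_t^{t+1}\|u_t\|_2^2\gtrsim c>0$ derived from Step 2 (the observability of kinetic energy, $\int\|u_t\|^2\ge c\int E$). This gives $E(t+1)\le E(t)(1-cE(t+1))$, i.e.\ $E(t+1)^2\lesssim D^2$ with the correct power. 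Nakao's lemma (for $\phi(t+1)^{1+\alpha}\le C(\phi(t)-\phi(t+1))$ with $\alpha=1$) then gives $E(t)\le C_0/t$ for $t\ge t_0$.
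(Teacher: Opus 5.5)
Your outline follows the paper's argument: the multiplier $u$ on a unit window, the mean-value choice of $t_1,t_2$, the bounds $C\,D(t)E(t)^{1/2}E(t+1)^{-1/2}$ and $C\,D(t)E(t)^{3/2}E(t+1)^{-1/2}$ for the boundary and damping terms, then Nakao's lemma. Integrating over $[t_1,t_2]$, as you do, is the consistent version of what the paper writes.

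As submitted, however, Step 3 does not prove the theorem. You end with a branch giving only $E(t+1)^3\lesssim D(t)^2$, i.e.\ $t^{-1/2}$, and leave the $t^{-1}$ rate to a plan. That branch is an artifact of your own estimate. In the case $E(t)\le 2E(t+1)$ you bounded $E(t)^{1/2}$ by $E_0^{1/2}$, which throws away exactly the information that defines the case. If you use $E(t)^{1/2}E(t+1)^{-1/2}\le\sqrt2$ instead, Step 2 reads
\[
E(t+1)\le C\Big(\frac{D(t)^2}{E(t+1)}+D(t)\Big).
\]
Whichever term on the right dominates, this gives $E(t+1)^2\le C\,D(t)^2$. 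In this case $E(t)^2\le 4E(t+1)^2$, and your other case already gives $E(t)^2\le 2E_0D(t)^2$. So $E(t)^2\le C\,(E(t)-E(t+1))$ for all $t\ge0$, which is exactly the hypothesis of Nakao's lemma with $\alpha=1$ and yields $E(t)\le C_0/t$.

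Your remedy (b) is this same fact in disguise, but as literally stated it is false. A uniform bound $\int_t^{t+1}\|u_t\|_2^2\ge c>0$ would give $E(t+1)\le e^{-c}E(t)$, i.e.\ exponential decay. That is impossible, because $E'=-E\|u_t\|_2^2\ge-2E^2$ forces $E(t)\ge(E_0^{-1}+2t)^{-1}$. The correct statement is $K:=\int_t^{t+1}\|u_t\|_2^2\ge c\,E(t+1)$. It follows from Step 2 in the case $E(t)\le 2E(t+1)$, where Step 2 reads $E(t+1)\le C\big(K+K^{1/2}E(t+1)^{1/2}\big)$, and then $D(t)^2\ge E(t+1)K\ge c\,E(t+1)^2$. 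Remedy (a) is not needed.

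Compared with the paper, your case split on $E(t)\le 2E(t+1)$ is a sound way to control the ratio $E(t)/E(t+1)$. The paper instead asserts the ratio is bounded because $D(t)\to0$, which does not follow. A correct reason is $E(t+1)\ge e^{-2E_0}E(t)$, since $(\ln E)'=-\|u_t\|_2^2\ge-2E\ge-2E_0$. The paper's final line is also garbled: $E\le CDE^{1/2}$ would give $E\le C^2D^2$, again exponential decay. The inequality $E(t)^2\le C(E(t)-E(t+1))$ that the paper is aiming for is precisely what your Step 3 produces once repaired.
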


\begin{proof}Multiply the equation \eqref{eq:main-weak} by $u$ and integrate over $\Omega \times [t, t+1]$. Integrating by parts in the time derivative yields the following identity for the potential energy:
\begin{align}\label{eq:nakao-identity}
\int_t^{t+1} \int_\Omega \left( |\nabla u|^2 + u^6 \right) dx,ds &= \int_t^{t+1} \int_\Omega |u_t|^2 dx,ds - \left[\int_\Omega u_t u ,dx \right]t^{t+1} \notag \\
&\quad - \int_t^{t+1} E(s) \int\Omega u_t u ,dx,ds.
\end{align}

Let $D(t)^2 = E(t) - E(t+1) = \int_t^{t+1} E(s) \|u_t(s)\|_2^2 \,ds$ denote the energy dissipation on the interval $[t, t+1]$. To obtain the desired decay, we must bound the right-hand side of \eqref{eq:nakao-identity} in terms of $D(t)$.

\textbf{Step 1: Estimating the boundary and damping terms.}

Using H\"older's inequality and the embedding $H_0^1 \hookrightarrow L^2$, we have $|\int_\Omega u_t u \,dx| \le C \|u_t\|_2 E(s)^{1/2}$. By the Mean Value Theorem for integrals, there exist $t_1 \in [t, t+1/4]$ and $t_2 \in [t+3/4, t+1]$ such that $\|u_t(t_i)\|_2^2 \le 4 \int_t^{t+1} \|u_t(s)\|_2^2 \,ds$. Since $E(s)$ is non-increasing, we have $\|u_t(t_i)\|_2 \le 2 D(t) E(t+1)^{-1/2}$.

Consequently:
\begin{equation}\left| \left[ \int_\Omega u_t u ,dx \right]t^{t+1} \right| \le C \frac{D(t)}{E(t+1)^{1/2}} E(t)^{1/2} = C D(t) \left( \frac{E(t)}{E(t+1)} \right)^{1/2}.
\end{equation}

For the damping term in \eqref{eq:nakao-identity}, we observe that $E(s) \le E(t)$, hence:
\begin{equation}
\left| \int_t^{t+1} E(s) \int_\Omega u_t u ,ds \right| \le E(t) \int_t^{t+1} |u_t|_2 |u|_2 ,ds \le C E(t)^{3/2} \int_t^{t+1} |u_t|_2 ,ds.
\end{equation}

Applying Cauchy-Schwarz to the last integral, we find
$$\int_t^{t+1} \|u_t\|_2 \,ds \le (\int_t^{t+1} E(s)^{-1} E(s) \|u_t\|_2^2 \,ds)^{1/2} \le E(t+1)^{-1/2} D(t).$$

Thus, the damping term is bounded by $C D(t) E(t)^{3/2} E(t+1)^{-1/2}$.

\textbf{Step 2: The Nakao inequality.}

Combining these estimates into \eqref{eq:nakao-identity} and adding $\int_t^{t+1} \|u_t\|^2 ds \le D(t)^2 / E(t+1)$ to both sides to reconstruct the full energy, we obtain:
\begin{equation}\label{eq:nakao-pre-diff}\int_t^{t+1} E(s) ,ds \le C \left( \frac{D(t)^2}{E(t+1)} + D(t) \frac{E(t)^{1/2}}{E(t+1)^{1/2}} + D(t) \frac{E(t)^{3/2}}{E(t+1)^{1/2}} \right).
\end{equation}

For $t$ sufficiently large, $E(t)/E(t+1)$ remains bounded by a constant. Indeed, since $E(t) - E(t+1) = D(t)^2$ and $D(t) \to 0$ as $t \to \infty$, the energy $E(t)$ satisfies a slow-varying property over unit time intervals. Since $E(s)$ is non-increasing, $\sup_{s \in [t, t+1]} E(s) \le E(t) \le \int_t^{t+1} E(s) ds + D(t)^2/E(t+1)$. From \eqref{eq:nakao-pre-diff}, and noticing that the $E(t)^{3/2}$ term dominates for small $E(t)$, we arrive at:\begin{equation}\label{eq:nakao-diff-final}
E(t) \le C D(t) E(t)^{1/2} \implies E(t)^2 \le C^2 D(t)^2 = C^2 \big( E(t) - E(t+1) \big).
\end{equation}

By Nakao's Lemma \cite{Nakao}, the difference inequality $E(t)^2 \le C(E(t)-E(t+1))$ implies the algebraic decay $E(t) \le C_0 (1+t)^{-1}$, which completes the proof.\end{proof}

\appendix
\section{Smooth Spectral Multipliers}

Let $\Omega \subset \mathbb{R}^3$ be a smooth bounded domain and
consider the Dirichlet Laplacian $-\Delta$ with domain
$D(-\Delta)=H^2(\Omega)\cap H_0^1(\Omega)$.

It is well known that $-\Delta$ admits a complete orthonormal basis
of eigenfunctions $\{\varphi_k\}_{k\ge1} \subset H_0^1(\Omega)$
associated with eigenvalues $\{\lambda_k^2\}_{k\ge1}$ satisfying

\[
-\Delta \varphi_k = \lambda_k^2 \varphi_k,
\qquad
0 < \lambda_1 \le \lambda_2 \le \cdots,
\qquad
\lambda_k \to \infty,
\]

together with

\[
\langle \varphi_k, \varphi_j \rangle_{L^2(\Omega)} = \delta_{kj}.
\]

\medskip
Let $\chi \in C_c^\infty(\mathbb{R})$ be an even smooth cut-off
function such that

\[
0 \le \chi \le 1,
\qquad
\chi(s)=1 \text{ for } |s|\le1,
\qquad
\chi(s)=0 \text{ for } |s|\ge2.
\]

For $m \ge 1$, we define the smooth spectral multiplier
$S_m : L^2(\Omega) \to L^2(\Omega)$ by

\begin{equation}\label{Sm-def}
S_m v
:=
\sum_{k=1}^{\infty}
\chi\!\left(\frac{\lambda_k}{m}\right)
\langle v, \varphi_k \rangle
\varphi_k.
\end{equation}

\medskip
The operator $S_m$ may be interpreted through functional calculus as

\[
S_m = \chi\!\left(\frac{\sqrt{-\Delta}}{m}\right),
\]

and therefore constitutes a pseudodifferential operator of order zero.

\subsection*{Basic Properties}

\begin{lemma}[L$^2$ boundedness]
For every $m \ge 1$,
\[
\|S_m v\|_{L^2(\Omega)} \le \|v\|_{L^2(\Omega)}.
\]
\end{lemma}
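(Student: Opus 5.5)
The plan is to expand $v$ in the orthonormal eigenbasis and apply Parseval's identity; the bound $0\le\chi\le1$ then gives the estimate directly. Fix $v\in L^2(\Omega)$ and write $v=\sum_{k\ge1} c_k\varphi_k$ with $c_k:=\langle v,\varphi_k\rangle$. Since $\{\varphi_k\}_{k\ge1}$ is a complete orthonormal basis of $L^2(\Omega)$, Parseval gives $\|v\|_{L^2}^2=\sum_k |c_k|^2<\infty$.

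The first step is to check that the series \eqref{Sm-def} defining $S_m v$ converges in $L^2(\Omega)$, so that $S_m$ is well defined. The coefficients $\chi(\lambda_k/m)c_k$ satisfy $|\chi(\lambda_k/m)c_k|\le |c_k|$ because $0\le\chi\le1$. Hence they are square summable, and by the Riesz--Fischer theorem the partial sums form a Cauchy sequence in $L^2(\Omega)$. In fact only the finitely many indices with $\lambda_k\le 2m$ contribute, since $\chi(s)=0$ for $|s|\ge2$ and $\lambda_k\to\infty$. So $S_m v$ is a finite linear combination of eigenfunctions, and convergence is immediate.

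Next we compute the norm by orthonormality. This gives
\[
\|S_m v\|_{L^2}^2=\sum_{k\ge1}\chi\!\left(\frac{\lambda_k}{m}\right)^2|c_k|^2\le\sum_{k\ge1}|c_k|^2=\|v\|_{L^2}^2 .
\]
Taking square roots yields the claim. The constant $1$ is independent of $m$, and since $\chi\ge0$ we even have $0\le S_m\le I$ as a self-adjoint operator.

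There is no genuine obstacle here. The only point needing care is the well-definedness of $S_m$ on all of $L^2$, and this is handled by the finite-support observation above.
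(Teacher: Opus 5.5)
Your proof is correct and follows the paper's argument: expand $v$ in the Dirichlet eigenbasis, apply Parseval, and use $0\le\chi\le1$ to bound $\chi(\lambda_k/m)^2\le 1$ termwise. Your extra remark that only the finitely many indices with $\lambda_k<2m$ contribute is correct; it makes explicit the well-definedness of $S_m$ on $L^2(\Omega)$, which the paper takes for granted.
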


\begin{proof}
Using Parseval's identity,
\[
\|S_m v\|_2^2
=
\sum_{k=1}^\infty
\chi\!\left(\frac{\lambda_k}{m}\right)^2
|\langle v, \varphi_k\rangle|^2
\le
\sum_{k=1}^\infty
|\langle v, \varphi_k\rangle|^2
=
\|v\|_2^2.
\]
\end{proof}

\begin{lemma}[Uniform L$^p$ boundedness]
For every $1 < p < \infty$, there exists a constant $C_p>0$
independent of $m$ such that
\begin{equation}\label{Lp-bound}
\|S_m v\|_{L^p(\Omega)} \le C_p \|v\|_{L^p(\Omega)}.
\end{equation}
\end{lemma}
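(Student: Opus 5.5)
The plan is to write $S_m=\chi(\sqrt{-\Delta}/m)$ through the spectral theorem and reduce the uniform $L^p$ bound to Gaussian heat kernel bounds for the Dirichlet Laplacian. This avoids any pseudodifferential calculus near $\partial\Omega$. Set $F(s):=\chi(\sqrt{|s|})$. Since $\chi$ is even and smooth, $s\mapsto\chi(s)$ is a smooth function of $s^2$, so $F\in C_c^\infty(\mathbb{R})$ with $\operatorname{supp}F\subset[-4,4]$, and
\[
S_m=F\bigl(-\Delta/m^2\bigr).
\]
The constants must not depend on $m$. We therefore want a spectral multiplier theorem for operators of the form $F(L/\theta)$, with bounds depending only on finitely many derivatives of $F$ and on the constants in the heat kernel estimates, not on $\theta$.

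\textbf{Step 1 (Gaussian bounds).} For the Dirichlet Laplacian on a bounded domain, domain monotonicity (the maximum principle) gives
\[
0\le p_t^\Omega(x,y)\le p_t^{\mathbb{R}^3}(x,y)=(4\pi t)^{-3/2}e^{-|x-y|^2/4t}\qquad\text{for all }t>0 .
\]
This is a Gaussian upper bound with constants independent of everything relevant. Moreover, $\Omega$ with Euclidean distance and Lebesgue measure satisfies the doubling condition $|B(x,r)\cap\Omega|\sim r^3$ for $r\le\operatorname{diam}\Omega$, because $\Omega$ is smooth (Lipschitz suffices).

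\textbf{Step 2 (kernel estimates).} We invoke the spectral multiplier theorem for nonnegative self-adjoint operators with Gaussian bounds on spaces of homogeneous type (Duong--Ouhabaz--Sikora; earlier Hebisch). Its hypothesis is
\[
\sup_{\theta>0}\,\bigl\|\eta(\cdot)\,F(\theta\,\cdot)\bigr\|_{W^{s,\infty}}<\infty\quad\text{for some } s>3/2 ,
\]
where $\eta$ is a fixed dyadic bump. Its conclusion is that $F(L)$ is weak type $(1,1)$ and bounded on $L^p$ for $1<p<\infty$, with norm controlled by this quantity. For our $F$ the quantity is finite because $F$ is smooth and compactly supported. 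It is also invariant under the dilation $F\mapsto F(\cdot/m^2)$, which is the point that yields $m$-uniformity. The mechanism inside that theorem, which we would sketch, is as follows. One writes $F(L/m^2)=G(e^{-L/m^2})$ with $G(\lambda):=F(-\log\lambda)\lambda^{-1}\cdot\lambda$. Alternatively one uses the Fourier representation
\[
F(L/m^2)=\frac{1}{2\pi}\int\widehat{H}(\xi)\,e^{(i\xi-1)L/m^2}\,d\xi ,\qquad H(s):=F(s)e^{s}.
\]
Here $H$ is Schwartz, and the complex-time heat kernel bounds, obtained from the Gaussian bound via Davies' Phragmén--Lindelöf argument, give
\[
|K_m(x,y)|\le C_N\,m^3(1+m|x-y|)^{-N}.
\]
The same estimate holds, with an extra factor $m$, for $\nabla_x K_m$ and $\nabla_y K_m$. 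At that point it suffices to use the smooth boundary and interior elliptic regularity, or simply the Hölder/integrated form of Hörmander's condition that the DOS theorem uses instead of pointwise gradients.

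\textbf{Step 3 (conclusion).} The kernel bound alone already gives
\[
\sup_x\int_\Omega|K_m(x,y)|\,dy\le C
\]
uniformly in $m$, and the same in $x$ for fixed $y$. By Schur's test, $S_m$ is bounded on every $L^p$, $1\le p\le\infty$, with norm at most $C$ independent of $m$. This makes the Calderón--Zygmund machinery unnecessary and gives the stated estimate for $1<p<\infty$ (in fact for all $p$). The main obstacle is Step 2: obtaining the off-diagonal decay of $K_m$ uniformly in $m$ up to the boundary. We would first try the Davies--Gaffney/complex-time Gaussian route, which needs only the upper bound from Step 1 and the doubling property. This way the boundary is handled entirely by the maximum principle comparison with $\mathbb{R}^3$, and no parametrix near $\partial\Omega$ is needed.
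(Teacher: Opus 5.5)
Your proposal is correct. It differs from the paper mainly in how much it actually proves: the paper's proof is one sentence, saying that smoothness of $\chi$ plus ``the spectral multiplier theorem for the Dirichlet Laplacian (Burq--Lebeau--Planchon)'' gives the bound.

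Your Step 2 follows that same route but makes it checkable. You verify the hypotheses: the Gaussian upper bound via domain monotonicity, and doubling of $(\Omega,|\cdot|,dx)$. You also explain why the constant does not depend on $m$: the H\"ormander-type quantity $\sup_{\theta>0}\|\eta\,F(\theta\cdot)\|_{W^{s,\infty}}$ is dilation invariant. The paper never addresses this $m$-uniformity, which is the whole content of the lemma.

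Your Steps 2--3 also contain a second, more elementary proof. The Fourier representation with $H(s)=F(s)e^{s}$ and Davies' complex-time Gaussian bounds give the kernel bound $|K_m(x,y)|\le C_N m^3(1+m|x-y|)^{-N}$. At $z=(1-i\xi)/m^2$ one has $\operatorname{Re}(1/z)=m^2/(1+\xi^2)$, and any polynomial loss in $\xi$ is absorbed by the rapid decay of $\widehat H$. Schur's test then gives uniform boundedness on every $L^p$, $1\le p\le\infty$. This route needs no Calder\'on--Zygmund theory and not even the doubling property: the complex-time extension uses only the $t^{-3/2}$ bound, and the Schur integral is dominated by an integral over $\mathbb{R}^3$. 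So it yields a stronger statement, endpoints included, from weaker input. The multiplier-theorem route is what one needs for symbols of finite smoothness ($s>3/2$), which is irrelevant here since $\chi\in C_c^\infty$. Either Step 2 or Steps 2--3 alone is a complete proof. Your argument also covers the paper's other normalization $S_m=\chi(A/m)$ in the second appendix, by taking $\theta=m$ instead of $m^2$.

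Three small clean-ups:
\begin{itemize}
\item Drop the claimed gradient bounds on $K_m$. Schur's test does not use them, and they are the only genuinely boundary-sensitive assertion in your write-up.
\item Smoothness of $F(s)=\chi(\sqrt{|s|})$ at $s=0$ comes from $\chi\equiv1$ on $[-1,1]$, not from evenness. Evenness gives $\chi(s)=g(s^2)$, hence $F(s)=g(|s|)$, which need not be smooth at $0$. In any case only $F$ on $[0,\infty)$ matters.
\item Your $G(\lambda)=F(-\log\lambda)\lambda^{-1}\cdot\lambda$ simplifies to $F(-\log\lambda)$. The usual device is $F(L)=G(e^{-L})e^{-L}$ with $G(\lambda)=F(-\log\lambda)\lambda^{-1}$.
\end{itemize}
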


\begin{proof}
Since $S_m=\chi(\sqrt{-\Delta}/m)$ with $\chi \in C_c^\infty$,
the multiplier is smooth. The boundedness then follows from the
spectral multiplier theorem for the Dirichlet Laplacian
(see Burq--Lebeau--Planchon).
\end{proof}

\begin{lemma}[Strong convergence]
For every $v \in L^2(\Omega)$,
\begin{equation}\label{Sm-conv}
S_m v \to v \quad \text{strongly in } L^2(\Omega).
\end{equation}
\end{lemma}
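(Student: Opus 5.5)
The plan is to expand $v$ in the eigenbasis and compare $S_m v$ with $v$ coefficient by coefficient, using Parseval's identity together with dominated convergence for series. Since $\{\varphi_k\}$ is a complete orthonormal basis of $L^2(\Omega)$, every $v\in L^2(\Omega)$ satisfies $v=\sum_k \langle v,\varphi_k\rangle\varphi_k$ in $L^2(\Omega)$. By the definition \eqref{Sm-def},
\[
\|S_m v - v\|_{L^2(\Omega)}^2=\sum_{k=1}^\infty \Bigl(1-\chi\!\left(\tfrac{\lambda_k}{m}\right)\Bigr)^2 |\langle v,\varphi_k\rangle|^2 .
\]

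First I will check pointwise convergence of each term. For fixed $k$, as soon as $m\ge\lambda_k$ we have $\lambda_k/m\le 1$, hence $\chi(\lambda_k/m)=1$. So the $k$-th term vanishes for all $m$ large, and in particular tends to $0$.

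Next I will supply a summable majorant. Since $0\le\chi\le1$, each term is bounded by $|\langle v,\varphi_k\rangle|^2$, and by Bessel/Parseval
\[
\sum_{k=1}^\infty|\langle v,\varphi_k\rangle|^2=\|v\|_2^2<\infty .
\]
Dominated convergence for series (counting measure on $\mathbb{N}$) then gives $\|S_m v-v\|_2\to0$.

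To avoid invoking dominated convergence, one can argue directly. Given $\varepsilon>0$, choose $N$ with $\sum_{k>N}|\langle v,\varphi_k\rangle|^2<\varepsilon$. For $m\ge\lambda_N$ all terms with $k\le N$ vanish, because the $\lambda_k$ are nondecreasing. Hence $\|S_m v-v\|_2^2<\varepsilon$ for such $m$.

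There is no genuine obstacle here. The only points to keep straight are that the ordering $\lambda_1\le\lambda_2\le\cdots$ makes the cutoff uniform over $k\le N$, and that completeness of the Dirichlet eigenbasis is what makes the expansion of $v$, and hence Parseval's identity, valid.
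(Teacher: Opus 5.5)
Your proposal is correct and follows the paper's route. The paper's proof is the one-line observation that $\chi(\lambda_k/m)\to1$ for each fixed $k$, combined with dominated convergence on the spectral expansion; your Parseval computation and your $\varepsilon$--$N$ tail argument are just a fully written-out version of that step.
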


\begin{proof}
Since $\chi(\lambda_k/m)\to1$ for each fixed $k$,
dominated convergence applied to the spectral expansion yields
the result.
\end{proof}

\begin{lemma}[Commutation]
For every sufficiently regular function $v$,
\begin{equation}\label{commutation}
-\Delta(S_m v) = S_m(-\Delta v).
\end{equation}
\end{lemma}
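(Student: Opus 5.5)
The plan is to work entirely at the level of the spectral expansion and use only the fact that $-\Delta$ acts diagonally on the eigenbasis. First I will fix what ``sufficiently regular'' means. I take $v \in D(-\Delta)=H^2(\Omega)\cap H_0^1(\Omega)$, so that $-\Delta v \in L^2(\Omega)$ and both sides of \eqref{commutation} are at least defined as elements of $L^2$ or of $H^{-1}$. In this setting the Fourier coefficients satisfy $\sum_k \lambda_k^4 |\langle v,\varphi_k\rangle|^2 = \|\Delta v\|_2^2 < \infty$.

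The central computation is the identity $\langle -\Delta v, \varphi_k\rangle = \lambda_k^2 \langle v,\varphi_k\rangle$. This follows from Green's formula twice: both $v$ and $\varphi_k$ vanish on $\partial\Omega$, so the boundary terms drop out, and $-\Delta\varphi_k=\lambda_k^2\varphi_k$. Inserting this into the definition \eqref{Sm-def} gives
\[
S_m(-\Delta v) = \sum_{k} \chi\!\left(\tfrac{\lambda_k}{m}\right)\lambda_k^2 \langle v,\varphi_k\rangle \varphi_k .
\]
For the other side, I note that $\chi(\lambda_k/m)=0$ once $\lambda_k\ge 2m$. Hence $S_m v$ is a finite linear combination of eigenfunctions, it lies in $D(-\Delta)$, and $-\Delta$ may be applied term by term, producing exactly the same sum. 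Uniqueness of expansions in the orthonormal basis then gives \eqref{commutation}.

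The only delicate point is justifying the termwise application of $-\Delta$ and the meaning of the equality. Compact support of $\chi$ removes any convergence issue on the $S_m v$ side, since that sum is finite. On the $S_m(-\Delta v)$ side the series converges in $L^2$ because its coefficients are dominated by $\lambda_k^2|\langle v,\varphi_k\rangle|$. For rougher $v$, say $v\in H_0^1$ with $-\Delta v\in H^{-1}$, I would use duality pairings instead of inner products: since each $\varphi_k\in H_0^1$, the same coefficient identity holds, and the argument goes through unchanged.

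This commutation is what justified moving $S_m$ past $-\Delta$ in \eqref{eq:reg_energy_id}. Equivalently, the lemma is the functional-calculus statement that $\chi(\sqrt{-\Delta}/m)$ commutes with $-\Delta$.
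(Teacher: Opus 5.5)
Your proposal is correct and follows the same route as the paper. The paper simply notes that $-\Delta S_m$ and $S_m(-\Delta)$ both act diagonally on $\{\varphi_k\}$ and reduce to $\sum_k \chi(\lambda_k/m)\lambda_k^2\langle v,\varphi_k\rangle\varphi_k$; your extra justifications (the coefficient identity $\langle -\Delta v,\varphi_k\rangle=\lambda_k^2\langle v,\varphi_k\rangle$ via Green's formula for $v\in H^2(\Omega)\cap H_0^1(\Omega)$, and the compact support of $\chi$ making $S_m v$ a finite combination of eigenfunctions in $D(-\Delta)$) make explicit what the paper leaves implicit.
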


\begin{proof}
Both operators are diagonal in the eigenfunction basis:
\[
-\Delta(S_m v)
=
\sum_{k}
\chi(\lambda_k/m)\lambda_k^2
\langle v,\varphi_k\rangle\varphi_k
=
S_m(-\Delta v).
\]
\end{proof}

\begin{lemma}[Regularizing effect]
For every $s \ge 0$, there exists $C_s>0$ independent of $m$ such that
\begin{equation}\label{regularization}
\|S_m v\|_{H^s(\Omega)} \le C_s m^s \|v\|_{L^2(\Omega)}.
\end{equation}
\end{lemma}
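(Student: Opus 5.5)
The estimate is meant in the scale adapted to the Dirichlet Laplacian, where $\|v\|_{H^s}$ is equivalent to $\|(-\Delta)^{s/2}v\|_{L^2}$. With that norm the claim reduces to a spectral computation. We first make the norm precise. For $s\ge0$ define $\mathcal D((-\Delta)^{s/2})=\{v:\sum_k \lambda_k^{2s}|\langle v,\varphi_k\rangle|^2<\infty\}$ with norm
\[
\|v\|_{\dot H^s_D}^2:=\sum_{k}\lambda_k^{2s}|\langle v,\varphi_k\rangle|^2 .
\]
Since $\lambda_k\ge\lambda_1>0$, this dominates $\lambda_1^{2s}\|v\|_2^2$, so adding the $L^2$ part changes nothing essential. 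By elliptic regularity on the smooth bounded domain $\Omega$, this norm is equivalent to the usual $H^s(\Omega)$ norm on $\mathcal D((-\Delta)^{s/2})$, with constants depending only on $s$ and $\Omega$. Concretely, $\mathcal D(-\Delta)=H^2\cap H^1_0$, iterating $\mathcal D((-\Delta)^{j})$ gives the even integer cases, and interpolation gives the rest.

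Next we check that $S_m v$ lies in every such domain. Its coefficients $\chi(\lambda_k/m)\langle v,\varphi_k\rangle$ vanish whenever $\lambda_k>2m$, so $S_m v$ is a finite combination of eigenfunctions. Hence $S_m v\in\bigcap_s\mathcal D((-\Delta)^{s/2})$, and the norm equivalence applies to it.

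The estimate itself comes from the support of $\chi$ and $0\le\chi\le1$:
\[
\|S_m v\|_{\dot H^s_D}^2=\sum_{\lambda_k\le 2m}\lambda_k^{2s}\chi(\lambda_k/m)^2|\langle v,\varphi_k\rangle|^2\le (2m)^{2s}\sum_k|\langle v,\varphi_k\rangle|^2=(2m)^{2s}\|v\|_2^2,
\]
using Parseval at the last step. Combining this with the norm equivalence gives $\|S_m v\|_{H^s}\le C_s' 2^s m^s\|v\|_2$. Since $m\ge1$, the lower-order $L^2$ contribution is bounded by $\|v\|_2\le m^s\|v\|_2$ and can be absorbed. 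We therefore take $C_s=C_s'2^s+1$, which does not depend on $m$.

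The only nontrivial ingredient is the equivalence between $\|(-\Delta)^{s/2}\cdot\|_{L^2}$ and the Sobolev norm on the appropriate domain. For non-even $s$ this requires interpolation and a careful statement of the boundary compatibility conditions; for $s>1/2$ those conditions are automatically satisfied because $S_m v$ is a finite sum of eigenfunctions. I would cite standard elliptic theory (e.g.\ Lions--Magenes) for this equivalence rather than reprove it. The rest is the bare spectral computation above.
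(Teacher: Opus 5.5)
Your proof is correct and is essentially the paper's argument: the same spectral computation, using that $\chi(\lambda_k/m)$ vanishes for $\lambda_k\ge 2m$, together with $0\le\chi\le1$ and Parseval. The only difference is that you make explicit the comparison between the spectral norm and the standard $H^s(\Omega)$ norm, which the paper simply takes as the definition of $\|\cdot\|_{H^s}$; note that only the one-sided embedding $\mathcal D((-\Delta)^{s/2})\hookrightarrow H^s(\Omega)$ is needed, which is fortunate, since full norm equivalence on the domain fails at $s=\tfrac12+2j$ (e.g.\ $\mathcal D((-\Delta)^{1/4})=H^{1/2}_{00}(\Omega)$).
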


\begin{proof}
From the spectral definition,
\[
\|S_m v\|_{H^s}^2
=
\sum_{k} (1+\lambda_k^2)^s
\chi(\lambda_k/m)^2 |\langle v,\varphi_k\rangle|^2.
\]

Since $\chi(\lambda_k/m)=0$ for $\lambda_k \ge 2m$,
we have $(1+\lambda_k^2)^s \le C m^{2s}$ on the support
of the multiplier, giving \eqref{regularization}.
\end{proof}

\appendix
\section{Smooth Spectral Multipliers: functional calculus and stability in $L^p$}
\label{app:spectral-multipliers}

\subsection{Dirichlet spectral calculus and fractional powers}

Let $\Omega\subset\mathbb R^3$ be a bounded $C^\infty$ domain and let
$A:=-\Delta_D$ denote the Dirichlet Laplacian on $\Omega$, with
$D(A)=H^2(\Omega)\cap H_0^1(\Omega)$.
There exist eigenpairs $\{(\lambda_k,\varphi_k)\}_{k\ge1}$ such that
\[
A\varphi_k=\lambda_k\varphi_k,\qquad
0<\lambda_1\le\lambda_2\le\cdots,\qquad
\{\varphi_k\}_{k\ge1}\ \text{orthonormal in }L^2(\Omega).
\]
For any bounded Borel function $m:[0,\infty)\to\mathbb C$, the spectral theorem
defines the operator $m(A)$ on $L^2(\Omega)$ by
\[
m(A)v := \sum_{k=1}^\infty m(\lambda_k)\,\langle v,\varphi_k\rangle\,\varphi_k,
\qquad v\in L^2(\Omega).
\]
In particular, for $s\in\mathbb R$ we define the fractional powers
\[
A^{s/2}v := \sum_{k=1}^\infty \lambda_k^{s/2}\,\langle v,\varphi_k\rangle\,\varphi_k,
\]
with domain $D(A^{s/2})$ equipped with the graph norm. We use the notation
$H^s_D(\Omega):=D((I+A)^{s/2})$ and the equivalence
\[
\|v\|_{H^s_D(\Omega)}\sim \|(I+A)^{s/2}v\|_{L^2(\Omega)}.
\]

\subsection{Definition of the smooth cutoff $S_m$ (same as in the main text)}

Fix $\chi\in C_c^\infty([0,\infty))$ such that $0\le\chi\le1$,
$\chi(r)=1$ for $0\le r\le 1$ and $\chi(r)=0$ for $r\ge 2$.
For $m\ge1$ define
\begin{equation}\label{eq:Sm-def-app}
S_m v := \chi\!\left(\frac{A}{m}\right)v
       = \sum_{k=1}^\infty \chi\!\left(\frac{\lambda_k}{m}\right)\langle v,\varphi_k\rangle\varphi_k,
\qquad v\in L^2(\Omega).
\end{equation}
Set also $S_m^\perp:=I-S_m$.

\begin{lemma}[Self-adjointness, $L^2$ contraction, commutation]\label{lem:Sm-L2-app}
For each $m\ge1$, $S_m$ is self-adjoint on $L^2(\Omega)$ and
\[
\|S_m\|_{\mathcal L(L^2,L^2)}\le 1,\qquad
\|S_m^\perp\|_{\mathcal L(L^2,L^2)}\le 1.
\]
Moreover, $S_m$ commutes with $A$ and with all fractional powers:
\[
S_m A^{s/2} = A^{s/2}S_m,\qquad \forall s\in\mathbb R.
\]
\end{lemma}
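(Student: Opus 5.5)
Everything here can be done directly in the eigenfunction basis $\{\varphi_k\}$. By \eqref{eq:Sm-def-app}, $S_m$ is the diagonal operator with real multipliers $\sigma_k:=\chi(\lambda_k/m)\in[0,1]$. The operator $S_m^\perp$ is also diagonal, with multipliers $1-\sigma_k\in[0,1]$. The plan is to verify each claim coefficientwise and then pass to the full operators using Parseval's identity and the orthonormality of the basis.

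\emph{Self-adjointness.} For $v,w\in L^2(\Omega)$, Parseval's identity gives
\[
\langle S_m v,w\rangle=\sum_k \sigma_k\langle v,\varphi_k\rangle\overline{\langle w,\varphi_k\rangle}=\langle v,S_m w\rangle .
\]
This uses only that the $\sigma_k$ are real. Since $S_m$ is bounded (see the next step), it is defined on all of $L^2(\Omega)$, so symmetry already amounts to self-adjointness.

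\emph{Contraction bounds.} Exactly as in the earlier $L^2$ boundedness lemma, we have
\[
\|S_m v\|_2^2=\sum_k\sigma_k^2|\langle v,\varphi_k\rangle|^2\le\|v\|_2^2 .
\]
The same computation with $(1-\sigma_k)^2\le 1$ gives $\|S_m^\perp\|\le1$. This is where the hypothesis $0\le\chi\le1$ is needed: it guarantees that both $\sigma_k$ and $1-\sigma_k$ lie in $[0,1]$.

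\emph{Commutation.} Take $v\in D(A^{s/2})$. The expansion $A^{s/2}v=\sum_k\lambda_k^{s/2}\langle v,\varphi_k\rangle\varphi_k$ converges in $L^2$, so we may apply the bounded operator $S_m$ termwise. Alternatively, we compare coefficients:
\[
\langle S_mA^{s/2}v,\varphi_k\rangle=\sigma_k\lambda_k^{s/2}\langle v,\varphi_k\rangle=\langle A^{s/2}S_mv,\varphi_k\rangle .
\]
For this to make sense we need $S_m v\in D(A^{s/2})$. That holds because $\sum_k\lambda_k^{s}\sigma_k^2|\langle v,\varphi_k\rangle|^2$ is dominated by the corresponding sum for $v$. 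In fact, $\sigma_k=0$ once $\lambda_k\ge 2m$, so $S_m v$ has only finitely many nonzero coefficients and lies in every $D(A^{s/2})$, including for $s<0$ since $\lambda_k\ge\lambda_1>0$. Two vectors in $L^2$ with identical Fourier coefficients coincide, so the identity holds on $D(A^{s/2})$. The case $s=2$ recovers commutation with $A$.

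The only delicate point is domain bookkeeping. The identity $S_mA^{s/2}=A^{s/2}S_m$ should be read on $D(A^{s/2})$ for $s>0$, and on all of $L^2(\Omega)$ for $s\le0$, where $A^{s/2}$ is bounded because $\lambda_1>0$. I would state it with that precision. Nothing deeper is needed.
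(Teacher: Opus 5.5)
Your proof is correct and uses the same argument as the paper: every claim reduces to the diagonal action of $S_m$ and $S_m^\perp$ in the eigenbasis $\{\varphi_k\}$, with real multipliers $\chi(\lambda_k/m)\in[0,1]$. You spell out what the paper's one-line proof leaves implicit, namely the domain bookkeeping for $S_mA^{s/2}=A^{s/2}S_m$: equality holds on $D(A^{s/2})$ for $s>0$, since $S_m v$ has only finitely many nonzero coefficients, and on all of $L^2(\Omega)$ for $s\le 0$, since $\lambda_1>0$.
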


\begin{proof}
All statements follow directly from the diagonal action of $S_m$ in the eigenbasis
$\{\varphi_k\}$, since $\chi(\lambda_k/m)\in[0,1]$ is real-valued.
\end{proof}


\begin{lemma}[Strong convergence in $L^p$]\label{lem:Sm-strong-Lp}
Let $1<p<\infty$. Then for every $v\in L^p(\Omega)$,
\[
S_m v \to v \quad\text{strongly in }L^p(\Omega)\quad\text{as }m\to\infty.
\]
Moreover, for every $s\in\mathbb R$ and every $v\in H^s_D(\Omega)$,
\[
S_m v \to v \quad\text{strongly in }H^s_D(\Omega)\quad\text{as }m\to\infty.
\]
\end{lemma}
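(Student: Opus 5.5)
The argument is the standard density principle: uniform boundedness plus convergence on a dense subspace gives strong convergence everywhere. We treat the two claims separately, starting with the Hilbert-space one, since it needs no multiplier theory.

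\textbf{The $H^s_D$ statement.} Let $v\in H^s_D(\Omega)$ and $v_k:=\langle v,\varphi_k\rangle$. By Lemma~\ref{lem:Sm-L2-app}, $S_m$ commutes with $(I+A)^{s/2}$. Hence
\[
\|S_m v-v\|_{H^s_D}^2\sim\sum_{k}(1+\lambda_k)^s\bigl(1-\chi(\lambda_k/m)\bigr)^2|v_k|^2 .
\]
Each summand tends to $0$, because $\chi(\lambda_k/m)=1$ as soon as $m\ge\lambda_k$. Each summand is also dominated by $(1+\lambda_k)^s|v_k|^2$, which is summable since $v\in H^s_D(\Omega)$. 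Dominated convergence for series then gives the claim, for every real $s$.

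\textbf{The $L^p$ statement.} Here we use the uniform bound \eqref{Lp-bound}, $\sup_m\|S_m\|_{\mathcal L(L^p)}\le C_p$, which we take from the spectral multiplier theorem as stated earlier. We need a dense class $\mathcal D\subset L^p(\Omega)$ on which $S_m w\to w$ in $L^p$.

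We take $\mathcal D=\mathrm{span}\{\varphi_k\}$, the finite linear combinations of eigenfunctions. For $w\in\mathcal D$ we have $S_m w=w$ exactly once $m\ge\max\{\lambda_k:\ \langle w,\varphi_k\rangle\neq0\}$. Eigenfunctions are smooth up to the boundary by elliptic regularity, so $\mathcal D\subset L^p(\Omega)$ for every $p$.

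Given $v\in L^p$ and $\varepsilon>0$, choose $w\in\mathcal D$ with $\|v-w\|_p<\varepsilon$. Then
\[
\|S_m v-v\|_p\le\|S_m(v-w)\|_p+\|S_m w-w\|_p+\|w-v\|_p\le (C_p+1)\varepsilon+\|S_mw-w\|_p ,
\]
and the last term vanishes for large $m$.

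\textbf{Main obstacle: density of $\mathcal D$ in $L^p$.} For $p\le2$ this is immediate: $\mathcal D$ is dense in $L^2$, $L^2$ is dense in $L^p$, and on a bounded domain $\|\cdot\|_p\lesssim\|\cdot\|_2$. For $p>2$ we use $\mathcal D$ to approximate $C_c^\infty(\Omega)$, which is dense in $L^p$.

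Let $w\in C_c^\infty(\Omega)$. Then $w\in D(A^N)$ for every $N$, since all its Laplacians still vanish near $\partial\Omega$. Its eigenfunction expansion therefore converges in $H^{2N}_D(\Omega)$ for every $N$, by the same series argument used in the $H^s_D$ step. Taking $2N>3/2$, the embedding $H^{2N}\hookrightarrow C(\overline\Omega)\hookrightarrow L^p$ shows that the partial sums, which lie in $\mathcal D$, converge to $w$ in $L^p$.

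So the only genuinely nontrivial input is the uniform $L^p$ bound \eqref{Lp-bound}; everything else is elementary spectral calculus.
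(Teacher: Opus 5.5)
Your proof is correct and has the same skeleton as the paper's: the uniform $L^p$ bound combined with convergence on a dense class. Your $H^s_D$ step also matches the paper's. The paper conjugates by $(I+A)^{s/2}$ and invokes $L^2$ convergence, which is exactly your dominated-convergence computation on the spectral series.

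The genuine difference is the choice of dense class in $L^p$.

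\textbf{The paper's route.} It takes $v_n\in L^2\cap L^p$ and asserts $S_m v_n\to v_n$ in $L^p$ ``by interpolation between $L^2$ and $L^p$''. For $p>2$ this justification is insufficient as stated. Convergence in $L^2$ together with mere boundedness in $L^p$ gives convergence only in $L^r$ for $r<p$; bumps of height $m^{3/p}$ on balls of radius $1/m$ show the abstract principle fails. To repair it one must choose $v_n\in L^\infty$ and use the uniform bound on some $L^q$ with $q>p$.

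\textbf{Your route.} You take $\mathcal D=\mathrm{span}\{\varphi_k\}$, on which $S_m$ is eventually the identity. Convergence on the dense class is then trivial, and the multiplier bound is needed only at the exponent $p$ itself. The price is proving density of $\mathcal D$ in $L^p$ for $p>2$. You handle this correctly via $C_c^\infty(\Omega)\subset D(A)$, convergence of the partial sums in $D(A)$, and $D(A)\subset H^2(\Omega)\hookrightarrow C(\overline\Omega)$. That last inclusion rests on the elliptic estimate $\|u\|_{H^2}\lesssim\|Au\|_{L^2}$ on $H^2\cap H_0^1$, which is worth citing explicitly. Note also that $N=1$ already suffices, so you do not need $D(A^N)$ for large $N$.

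Your argument is slightly longer, but it is fully rigorous at the one point where the paper's is only sketched.
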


\begin{proof}
Fix $1<p<\infty$. Let $v\in L^p(\Omega)$ and choose $v_n\in L^2(\Omega)\cap L^p(\Omega)$
such that $v_n\to v$ in $L^p(\Omega)$ (density of $L^2\cap L^p$ in $L^p$).
By uniform $L^p$ boundedness of $S_m$ (Lemma A.2 in the main text),
\[
\|S_m(v-v_n)\|_{L^p}\le C_p\|v-v_n\|_{L^p}.
\]
For fixed $n$, we also have $S_m v_n\to v_n$ in $L^2(\Omega)$ by Lemma A.3; since
$\{S_m v_n\}_m$ is bounded in $L^p(\Omega)$ and $v_n\in L^2\cap L^p$, we can conclude
$S_m v_n\to v_n$ in $L^p(\Omega)$ (for instance, by interpolation between $L^2$ and $L^p$,
or by a standard cutoff/interpolation argument on bounded domains).
Hence, for any $\varepsilon>0$, pick $n$ so that $\|v-v_n\|_{L^p}\le \varepsilon$, and then
take $m$ large so that $\|S_m v_n-v_n\|_{L^p}\le \varepsilon$. We get
\[
\|S_m v-v\|_{L^p}\le \|S_m(v-v_n)\|_{L^p}+\|S_m v_n-v_n\|_{L^p}+\|v_n-v\|_{L^p}
\le (C_p+2)\varepsilon,
\]
which proves the $L^p$ convergence.

For the $H^s_D$ convergence, apply the previous argument to $(I+A)^{s/2}v\in L^2$,
using the commutation $(I+A)^{s/2}S_m=S_m(I+A)^{s/2}$ from Lemma
\ref{lem:Sm-L2-app}.
\end{proof}

\begin{remark}[Why this appendix is essential in the BT model, and where KV differs]
In the BT damping model, the only purpose of $S_m$ is to replace the sharp spectral
projector by a smooth multiplier which remains uniformly bounded on $L^p$ and is
compatible with nonhomogeneous Strichartz estimates. This is precisely the mechanism
used in Section 3 of the paper. In contrast, for Kelvin--Voigt damping
$-\mathrm{div}(a(x)\nabla u_t)$, one additionally needs frequency-local estimates
(Bernstein-type inequalities) and commutator bounds with multiplication by $a(x)$.
\end{remark}

\begin{remark}
Smooth spectral multipliers play a crucial role in energy--critical
dispersive problems. In contrast with sharp Galerkin projections,
operators of the form $S_m=\chi(\sqrt{-\Delta}/m)$ remain uniformly
bounded in $L^p$ spaces, preventing the pathological growth typically
associated with abrupt spectral truncations. This uniform boundedness
is precisely what enables the derivation of Strichartz estimates
independent of the approximation parameter.
\end{remark}


\end{document}